\hoffset=-1.2cm
\voffset=-1.2cm
\documentclass[12pt]{article}
\usepackage{amsmath,latexsym,amssymb,amsthm,enumerate,amsmath,amscd}
\usepackage[mathscr]{eucal}
\usepackage[all,cmtip]{xy}
\setlength{\topmargin}{0.0in}
\setlength{\textheight}{23cm}
\setlength{\evensidemargin}{0.35in}
\setlength{\oddsidemargin}{0.35in}
\setlength{\headsep}{0.1cm}
\setlength{\textwidth}{16.8cm}
\setlength{\parindent}{0.6cm}
\setlength{\unitlength}{1mm}

\theoremstyle{plain}
\newtheorem{theorem}{Theorem}[section]

\newtheorem{corollary}[theorem]{Corollary}
\newtheorem{lemma}[theorem]{Lemma}
\theoremstyle{definition}
\newtheorem{definition}[theorem]{Definition}

\newtheorem{example}[theorem]{Example}

\newtheorem{thm}{Theorem}

\newtheorem*{uthm}{Theorem}

\newtheorem*{ack}{Acknowledgment}

\newtheorem{quest}[thm]{Question} 
\numberwithin{equation}{section}
\numberwithin{table}{section} 
\setcounter{secnumdepth}{2}
\setcounter{tocdepth}{2}
\DeclareMathOperator{\Grass}{\rm{Grass}}

\def\MR{\mathrm{MR}}

\def\Grass{\mathrm{Grass}}

\def\cod{\mathrm{cod}}
\def\codk{{\mathrm{cod}_{\sf k}}}
\def\<{\left<}
\def\>{\right>}
\def\Gl{\mathrm{Gl}}

\def\ns{\footnotesize \it}

\def\max{\mathrm{max}}

\def\dim{\mathrm{dim}}

\title{Strata of vector spaces of forms in $R={\sf k}[x,y]$, and of rational curves in $\mathbb P^{k}$.\footnote{ {\bf 2010 Mathematics Subject Classification}: Primary: 14D20; Secondary: 13D40, 13E10, 14H60.
{\bf Keywords}: rational curve, tangent bundle, parametrization.}}
\author{
Anthony Iarrobino\\[.05in]
{\ns Department of Mathematics, Northeastern University, Boston, MA 02115,
 USA.
}\\[.2in]}
\date{September 14, 2014}
\begin{document}
\maketitle
\begin{abstract}
Consider the polynomial ring $R={\sf k}[x,y]$ over an infinite field $\sf k$ and the subspace $R_j$ of degree-$j$ homogeneous polynomials. The Grassmanian $G=\Grass (R_j,d)$  parametrizes the vector spaces $V\subset R_j$ having dimension $d$. The strata $\Grass_H(R_j,d)\subset G$ determined by the Hilbert functions $H=H(R/(V))$ or, equivalently, by the Betti numbers of the algebras $R/(V)$, are locally closed and irreducible of known dimension. They satisfy a frontier property that the closure of a stratum is its union with lower strata \cite{I,I1}.  The strata are determined also by the decomposition of the restricted tangent bundle $\mathcal T_V=\phi_V^*(\mathcal T)$ where $\mathcal T$ is the tangent bundle on $\mathbb P^{d-1}$ and $\phi_V$ is the rational curve determined by $V$. Each stratum corresponds to a partition, and the the poset of strata under closure is isomorphic to the poset of corresponding partitions in the Bruhat order.  They are coarser than the strata defined by D. Cox, A. Kustin, C. Polini, and B.~Ulrich \cite{CKPU} and studied further in \cite{KPU}, that are determined in part by singularities of $\phi_V$. We explain these results and give examples to make them more accessible.  We also generalize a result of
D.~Cox, T. Sederberg, and F. Chen and another of C.~D'Andrea concerning the dimension and closure  of $\mu$ families of parametrized rational curves  from planar \cite{CSC,D} to higher dimensional embeddings (Theorem \ref{paramratcurvethm}).

\end{abstract}
Consider the polynomial ring $R={\sf k}[x,y]$ with maximum ideal ${\mathfrak m}=(x,y)$ over an infinite field $\sf k$, and write $R=\oplus_0^\infty R_j$. Let $V\subset R_j$ be a $d$-dimensional vector space of degree-$j$ forms; then $V$ determines a rational curve $\phi_V: \mathbb P^1\to \mathbb P^{d-1}$. We denote by $\Grass(R_j,d)$ the Grassmann variety parametrizing all such $d$-dimensional subspaces of $R_j$. The Hilbert function $H(A), A=R/(V)$ is the sequence 
\begin{equation}
H(A)=(1,2,\ldots ,j, j+1-d,  \ldots h_i,\ldots ), h_i=\dim_{\sf k}(A_i).
\end{equation} 
The sequences $H$ possible for such a Hilbert function are well-known (Lemma \ref{Hstratalem}).  We denote by $\Grass_H(R_j,d)\subset \Grass(R_j,d)$ the corresponding parameter space.\par
C. Polini described in her talk \cite{Po} her study with collaborators D. Cox, A. Kustin, and B.~Ulrich \cite{CKPU,KPU}  of certain locally closed  subfamilies  of $\Grass(R_j,d)$ determined by the singularity types of the associated rational planar and space curves $\phi_V$. They relate these singularity types to strata of normalized relation matrices for the ideal $(V)$ by relation degrees, then by zeroes of entries and equalities between entries. The purpose of this note is to recall and illustrate some of the properties of the coarser stratification by Hilbert function $H(R/(V))$ -- or relation degrees of $(V)$ -- studied in \cite{I,GIS,I1}. We hope that others might wish to extend these combinatorial descriptions and deformation properties to the finer stratification described by C. Polini in her talk. In section~\ref{paramratcurves} we generalize a result of D.~Cox, T. Sederberg, and F. Chen \cite{CSC}, and of C.~D'Andrea \cite{D} on $\mu$-bases to higher embedding dimensions.
In Section \ref{ratscrollsec} we briefly mention analogous results for rational scrolls.\par
 We denote by $\mathcal T$ the tangent bundle on $\mathbb P^{d-1}$. The restricted tangent bundle $\mathcal T_V=\phi_V^*(\mathcal T)$ on $\mathbb P^1$ has a decomposition $\mathcal T_V=\oplus \mathcal O(k_i)$ into a direct sum of line bundles: knowing the degrees $K=\{ k_i\}$  of the summands is equivalent to knowing the Betti numbers in a minimal resolution of a basis of $V$, or equivalently, to knowing the column degrees $D$ of the $d\times (d-1)$ Hilbert-Burch relation matrix $M$ among the generators. This decomposition is also equivalent to simply knowing the Hilbert function of the algebra  $R/(V)$ \cite{Asc1,Asc2, GIS, GHI, Ber}. \par
There have been many further studies of the geometry of this decomposition, as \cite{BR,Ber1,Ber,Cl,EV1,EV2,G,GS,GHI,He,HK,Ka,PS,Ra1,Ra2,Ra3, Ran1,Ran2}. \par In \cite[Section 4B]{I} and \cite{I1} we studied the stratification of $G=\Grass(R_j,d)$ by the Hilbert function $H=H(R/(V))$. We determined which sequences $H$ can occur, and gave a 1-1 correspondence $H\to P_H$ between the Hilbert function strata $\Grass_H(R_j,d)$ and certain partitions of $j+1-d$ \cite[Lemma 2.23]{I1}\footnote{Warning: the $H$ used here is written $T$ in \cite{I1}; it is the \emph{tail} (higher degree) part of the Hilbert function of the full ancestor ideal, for which there are similar results.} (see Theorem \ref{mainthm1} below). We also determined the codimension of the $H$ stratum in $G$ \cite[Proposition 4.7]{I},\cite[Thm. 2.17]{I1} (see \eqref{codeq2} below). \par  We denote by $G_H$ the projective variety parametrizing the graded ideals $I\subset R$ such that $H(R/I)=H$. We set $c_H=\lim_{i\to \infty}h_i$: it is the degree of the common factor of each ideal $I\in G_H$.
Given a sequence $H=(\cdots h_i\cdots)$ let $\varrho(H)=\min\{ i\mid h_i\not=i+1\}$, and set $E_H=(e_{\varrho},\ldots , e_i,\ldots  ), e_i=\Delta H_{i-1}=h_{i-1}-h_{i}$. Then $H$ occurs as a Hilbert function $H(R/I) $ for a graded ideal $I$ of $R$ iff $e_i\ge 0$ for $i\ge \varrho (H)$ \cite{Mac}.  We showed
\begin{theorem}\label{Hdimthm}\cite[Theorem~2.12]{I},
\cite[Theorem 1.10]{I1}. 
The variety $G_H$ when nonempty is an irreducible nonsingular projective variety of dimension
\begin{equation}\label{dimeq}
\dim G_H=c_H+\sum_{i\ge \varrho (H)} (e_i+1)\cdot (e_{i+1}).
\end{equation}
\end{theorem}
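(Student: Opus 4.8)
My plan is to reduce to the Artinian case and then play off Hilbert--Burch against the deformation theory of the stratum. \textbf{Reduction.} Given $I\in G_H$, the zero-scheme $\Proj(R/I)\subset\mathbb P^1$ is a Cartier divisor, so its ideal is $(f)=I^{\mathrm{sat}}$ for a form $f\in R_{c_H}$, unique up to scalar; I would check that $\varrho(H)\ge c_H$, that $I=f\cdot J$ with $J:=I:f$ an $\mathfrak m$-primary graded ideal of Hilbert function $H'$ given by $h'_i=h_{i+c_H}-c_H$, and that $I\mapsto([f],J)$ and $([f],J)\mapsto fJ$ are mutually inverse morphisms. This gives $G_H\cong\mathbb P^{c_H}\times G_{H'}$ with $G_{H'}$ the corresponding Artinian stratum; since $\varrho(H')=\varrho(H)-c_H$ and $e'_i=e_{i+c_H}$, irreducibility, smoothness and projectivity of $G_H$ descend from the two factors, and $\dim G_H=c_H+\dim G_{H'}=c_H+\sum_{i\ge\varrho(H)}(e_i+1)e_{i+1}$ once \eqref{dimeq} is proved in the Artinian case. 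So assume from now on that $I$ is $\mathfrak m$-primary.

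A graded ideal $I$ with $H(R/I)=H$ amounts to a family $(I_d\subseteq R_d)_{d\ge0}$ with $\dim_{\sf k}I_d=d+1-h_d$ and $R_1 I_d\subseteq I_{d+1}$, only finitely many of these conditions being non-trivial; hence $G_H$ is a closed $\mathrm{GL}_2$-stable subscheme of a finite product of Grassmannians --- in particular projective --- carrying a universal family, so $T_{[I]}G_H=\Hom_R(I,R/I)_0$ and its obstructions lie in $\Ext^1_R(I,R/I)_0\cong\Ext^2_R(R/I,R/I)_0$ (from $0\to I\to R\to R/I\to0$). I would then show the latter vanishes: with $A=R/I$ Cohen--Macaulay of codimension $2$ and minimal free resolution $P_\bullet=[\,0\to F_1\to F_0\to R\,]$ (Hilbert--Burch), graded local duality over $R$ with $\omega_R=R(-2)$ gives $\Ext^i_R(A,R)=0$ for $i\ne2$ together with a four-term exact sequence $0\to R\to F_0^\vee\to F_1^\vee\to A^\vee(2)\to0$, where $A^\vee=\Hom_{\sf k}(A,{\sf k})$. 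Since $\Ext^2_R(A,A)=H^2\big(\Hom_R(P_\bullet,R)\otimes_R A\big)=\mathrm{coker}\big(F_0^\vee\otimes_R A\to F_1^\vee\otimes_R A\big)$, splitting that sequence at the image of $F_0^\vee\to F_1^\vee$ and tensoring the two short exact pieces with $A$ identifies this cokernel with $A^\vee(2)\otimes_R A=A^\vee(2)$ (as $I$ annihilates $A^\vee$). Thus $\Ext^2_R(R/I,R/I)\cong(R/I)^\vee(2)$, whose degree-$0$ component $(A_{-2})^\ast$ is $0$. Hence the obstruction space vanishes at every point, so $G_H$ is smooth, of local dimension $\dim_{\sf k}\Hom_R(I,R/I)_0$ at $[I]$.

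For the dimension I would apply $\Hom_R(-,R/I)_0$ to $0\to F_1\to F_0\to I\to0$; using $\Ext^1_R(I,R/I)_0=0$ this yields $\dim_{\sf k}\Hom_R(I,R/I)_0=\sum_i h_{a_i}-\sum_j h_{b_j}=\sum_d(n_d-r_d)h_d$, where $a_i$ (resp. $b_j$) run over the degrees, with multiplicities $n_d$ (resp. $r_d$), of the minimal generators (resp. first syzygies) of $I$. Comparing Hilbert series, $\sum_i t^{a_i}-\sum_j t^{b_j}=(1-t)^2H_I(t)=1-(1-t)^2H_{R/I}(t)$, so $n_d-r_d=e_d-e_{d-1}$ for all $d\ge1$, with the convention $e_d=-1$ for $d<\varrho(H)$. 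A summation by parts, using $h_d=\sum_{k>d}e_k$ and $h_{\varrho-1}=\varrho$, then rewrites $\sum_d(e_d-e_{d-1})h_d$ as $\sum_{i\ge\varrho(H)}(e_i+1)e_{i+1}$; combined with the reduction above this is \eqref{dimeq}. (That $G_H\ne\varnothing$ exactly when $e_i\ge0$ for all $i\ge\varrho(H)$ is Macaulay's theorem.)

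Finally, for irreducibility: $G_H$ is $\mathrm{GL}_2$-stable and projective, and for each $I\in G_H$ the generic initial ideal of $I$ is the unique Borel-fixed (lex-segment) monomial ideal $L_H$ with Hilbert function $H$; being a flat degeneration of $g\cdot I$ ($g$ generic) preserving $H$, the point $[L_H]$ lies in the closure of $\mathrm{GL}_2\cdot I$ inside $G_H$, hence on every irreducible component. Since $G_H$ is smooth its components are disjoint, so $G_H$ is irreducible. The one genuinely delicate ingredient is the identification $\Ext^2_R(R/I,R/I)\cong(R/I)^\vee(2)$: its vanishing in degree $0$ is exactly what forces $G_H$ to be everywhere smooth, and hence what lets the tangent-space computation deliver the dimension; the combinatorial identity and the degeneration argument are then essentially formal. (An alternative route to irreducibility and the dimension, closer to \cite{I1}, parametrizes $G_H$ by normalized Hilbert--Burch matrices --- whose column and row degrees are determined by $H$ --- modulo the relevant change-of-basis group, and identifies the generic stabilizer.)
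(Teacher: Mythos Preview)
The paper does not give its own proof of Theorem~\ref{Hdimthm}; it quotes the result from \cite[Theorem~2.12]{I} and \cite[Theorem~1.10]{I1}. Those arguments proceed by an explicit parametrization of $G_H$ --- essentially the normalized Hilbert--Burch-matrix description you allude to in your closing parenthetical --- building $G_H$ as an iterated bundle from which irreducibility, smoothness, and the dimension formula are read off directly. Your deformation-theoretic route is genuinely different and, for the most part, works well: the reduction to the Artinian case is correct; the identification $\Ext^2_R(R/I,R/I)\cong (R/I)^\vee(2)$ via Hilbert--Burch and graded local duality is valid in every characteristic and gives smoothness cleanly; and the summation-by-parts translation of $\dim_{\sf k}\Hom_R(I,R/I)_0$ into $\sum_{i\ge\varrho}(e_i+1)e_{i+1}$ checks out. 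This approach has the advantage of separating the obstruction calculation from the combinatorics.

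There is, however, a genuine gap in your irreducibility argument when $\cha{\sf k}=p>0$, and the paper explicitly works over an arbitrary infinite field. You assert that the generic initial ideal of every $I\in G_H$ is the lex-segment ideal $L_H$, calling it ``the unique Borel-fixed monomial ideal with Hilbert function $H$.'' In positive characteristic Borel-fixed does not imply strongly stable, so uniqueness fails. Concretely, take $\cha{\sf k}=2$ and $H=(1,2,1,0)$: since $(ax+by)^2=a^2x^2+b^2y^2$, the ideal $(x^2,y^2)$ is fixed by all of $\mathrm{GL}_2({\sf k})$, hence equals its own gin, which is \emph{not} $L_H=(x^2,xy,y^3)$. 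Thus $L_H$ does not lie in the closure of the $\mathrm{GL}_2$-orbit of $(x^2,y^2)$, and your ``all components meet at $L_H$'' step collapses. The theorem remains true in this example --- $G_H$ is an open subset of $\Grass(2,R_2)\cong\mathbb P^2$, visibly irreducible --- but you need a characteristic-free argument, for instance the explicit fibration used in \cite{I,I1}, to close this gap.
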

Now we set $\varrho=j$, so $E_H=(e_j,e_{j+1},\ldots), e_i=h_{i-1}-h_i$, and let $\tau_H =e_{j+1}+1$. We denote by $\Grass_H(R_j,d)$ the subvariety of $G=\Grass(R_j,d)$ parametrizing the vector spaces $V\subset R_j$ of Hilbert function $H(R/(V))=H$. It is locally closed in $G$.
\begin{lemma}\label{Hstratalem}\cite[Proposition 4.6]{I}
The stratum $\Grass_H(R_j)\subset G$ is nonempty if and only if $1\le \tau_H\le \min (d,j+2-d)$, and $E_H$ is non-increasing. Then it is irreducible and open dense in $G_H$.
\end{lemma}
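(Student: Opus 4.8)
The plan is to pass from $V$ to the minimal graded free resolution of $A=R/(V)$ and read the numerical conditions off its shape. Since $R={\sf k}[x,y]$ has global dimension $2$, clearing the greatest common divisor $g$ of $V$ (which has degree $c_H$) reduces $(V)$ to the Cohen--Macaulay ideal $(V):g$, and Hilbert--Burch shows that for every $V\subset R_j$ with $\dim V=d\ge 1$ the minimal resolution has the form
\[
0\to \bigoplus_{l=1}^{d-1}R(-a_l)\xrightarrow{\,M\,} R(-j)^{d}\to R\to A\to 0 ,\qquad a_l\ge j+1,
\]
where $a_l>j$ because the $d$ chosen generators of $(V)$ are linearly independent, so there is no syzygy in degree $j$. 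Expanding $\sum h_it^i=\bigl(1-dt^{\,j}+\sum_l t^{a_l}\bigr)/(1-t)^2$ and multiplying by $1-t$ gives, for $i\ge j$,
\[
e_i=h_{i-1}-h_i=(d-1)-\#\{\,l\mid a_l\le i\,\},
\]
which is visibly non-increasing with $e_j=d-1$; hence $\tau_H=e_{j+1}+1\le d$, while $\tau_H\ge 1$ and $\tau_H\le j+2-d$ follow respectively from $0\le\#\{l\mid a_l=j+1\}\le d-1$ and from $h_{j+1}=(j+1-d)-e_{j+1}\ge 0$. This settles the ``only if'' direction.

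For the ``if'' direction, assume the numerical conditions. Since $E_H$ is non-increasing and $H$ is eventually constant, $e_i\ge 0$ for all $i\ge j$, hence $c:=c_H\le h_j=j+1-d$, i.e.\ $e:=j-c\ge d-1$. Running the resolution backwards: the multiset $\{a_l\}$ defined by $\#\{l\mid a_l=m\}=e_{m-1}-e_m$ for $m\ge j+1$ has non-negative multiplicities (by monotonicity of $E_H$), total $d-1$, and a short telescoping computation gives $\sum_l(a_l-j)=\sum_{i\ge j}e_i=h_{j-1}-c=j-c$; put $a'_l:=a_l-c\ge e+1$, so $\sum_l(a'_l-e)=e$. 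Choose a generic $d\times(d-1)$ matrix $M$ over $R$ whose $l'$-th column has entries of degree $a'_{l'}-e\ge 1$. By the standard structure theory of codimension-two perfect ideals, $J:=I_{d-1}(M)$ is then perfect of codimension $2$, minimally generated by $d$ forms of degree $e$, with minimal resolution $0\to\bigoplus R(-a'_l)\to R(-e)^d\to R\to R/J\to 0$; in particular $R/J$ is Artinian and $\Delta H(R/J)$ agrees with $E_H$ on $[e,\infty)$. Now set $V:=f\cdot J_e\subset R_j$ for any form $f$ of degree $c$ (e.g.\ $f=x^c$): then $\dim V=d$, $(V)=fJ$ has $\gcd$ of degree exactly $c$, and computing $(V)_i=fJ_{i-c}$ yields $h_i\bigl(R/(V)\bigr)=c+\dim(R/J)_{i-c}=H_i$ for every $i$. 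Hence $V\in\Grass_H(R_j,d)$ and the stratum is nonempty.

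Finally, since $(V)_j=V$, the assignment $V\mapsto (V)$ is a locally closed immersion of $\Grass_H(R_j,d)$ into $G_H$ whose image is the locus $\{\,I\in G_H\mid I=(I_j)\,\}$ of graded ideals minimally generated in degree $j$. This is an open condition: surjectivity of the multiplication maps $R_{i-j}\otimes I_j\to I_i$ for the finitely many relevant $i$ holds off a closed subset. As $G_H$ is irreducible by Theorem~\ref{Hdimthm} and the stratum is nonempty, it is an irreducible open dense subvariety of $G_H$.

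The one non-formal step is the genericity argument in the ``if'' direction: one must be sure a matrix with the prescribed column-degree pattern exists and that a generic such matrix has maximal minors defining an ideal of the \emph{expected} codimension $2$ with \emph{no} consecutive cancellation in its Hilbert--Burch resolution. This is exactly where the hypotheses enter---$E_H$ non-increasing is what makes the multiplicities $e_{m-1}-e_m$ non-negative, and $\tau_H\le j+2-d$ (equivalently $c_H\le j+1-d$, i.e.\ $e\ge d-1$) is precisely what allows $d-1$ columns of positive degree to sum to $e$. I would also verify the degree bookkeeping in the last displayed computation with care, since that is the natural place for an off-by-one in the shifts to creep in.
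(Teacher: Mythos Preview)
The paper does not supply its own proof of this lemma; it is merely cited from \cite[Proposition~4.6]{I}, so there is no in-text argument to compare against directly.

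Your proof is correct. The route you take---reading the numerical constraints off the shape of the length-two free resolution of $R/(V)$, then reversing the construction via a generic Hilbert--Burch matrix with prescribed column degrees---is the natural one in $R={\sf k}[x,y]$ and is fully consistent with how the paper itself packages the data later (equation~\eqref{minreseq} and the partition $D$ of \eqref{Deq}). Your formula $e_i=(d-1)-\#\{l\mid a_l\le i\}$ is exactly the second-difference computation from the Hilbert series, and the telescoping identity $\sum_l(a_l-j)=j-c_H$ is the correct bookkeeping. The reduction to finitely many surjectivity conditions for openness in $G_H$ is also right: once $e_{i+1}=0$ the inequality $\dim R_1I_i\ge\dim I_i+1$ (which is $\tau(I_i)\ge 1$) forces $R_1I_i=I_{i+1}$ automatically, so only the finitely many degrees with $e_{i+1}>0$ need checking.

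Two small remarks. First, your opening sentence about clearing the gcd is slightly misleading as written: you do not actually need Cohen--Macaulayness to get a length-two resolution, since $R$ has global dimension $2$ and $(V)\ne 0$ already forces $\mathrm{pd}_R(R/(V))\le 2$; the gcd only enters when you want to interpret $c_H$. Second, the ``genericity'' in the converse can be made entirely explicit (and hence valid over any infinite field, or even by an explicit monomial construction over any field): the staircase matrix whose $l$-th column is $(0,\ldots,0,x^{b_l},y^{b_l},0,\ldots,0)^T$ with the nonzero block sliding down produces distinct monomial minors generating an $\mathfrak m$-primary ideal, so no appeal to Bertini-type arguments is required. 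With that in hand, your proof is complete and self-contained.
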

We denote by $\mathcal H(j,d)$ the set of sequences $H=(h_j,h_{j+1},\ldots)$ satisfying the conditions of  Lemma \ref{Hstratalem}.
 For $H',H\in \mathcal H(j,d)$ we define the termwise partial order 
\begin{equation}\label{Hposeteq}
 H'\ge H\Leftrightarrow h'_i\ge h_i\text { for } j\le i< \infty. 
\end{equation}
  These strata satisfy a frontier property that the closure of a stratum is a union of strata.
\begin{theorem}\label{closurethm}\cite[Theorem 4.10]{I},\cite[Theorem 2.32]{I1}  The closure $\overline{\Grass_H(R_j,d)}$ of each stratum of $G$ satisfies
\begin{equation}\label{frontiereq}
\overline{\Grass_H(R_j,d)}=\bigcup_{H'\in \mathcal H(j,d) \mid H'\ge H}\Grass_{H'}(R_j,d)
\end{equation}
the union of the strata whose Hilbert function is greater or equal termwise to $H$. The projection $\mathfrak p:\,G_H\to \overline{Grass_H(R_j,d)}$ where $ \mathfrak p(I)= I_j$ is a desingularization of the Zariski closure $\overline{\Grass_H(R_j,d)}\subset G$.
\end{theorem}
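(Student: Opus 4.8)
The plan is to deduce both assertions from three facts about the projection $\mathfrak p\colon G_H\to G=\Grass(R_j,d)$, $\mathfrak p(I)=I_j$: that $G_H$ is a smooth irreducible projective variety (Theorem~\ref{Hdimthm}), so $\mathfrak p(G_H)$ is closed and irreducible; that $\Grass_H(R_j,d)$ is identified, via $\mathfrak p$, with an open dense subvariety of $G_H$ (Lemma~\ref{Hstratalem}), namely the locus of ideals $I$ with $I=(I_j)$; and an easy semicontinuity input. First I would record that for each fixed $i\ge j$ the rank of the multiplication map $V\otimes R_{i-j}\to R_i$, which equals $\dim_{\sf k}(V\cdot R_{i-j})$, can only drop under specialization of $V$, so $h_i(R/(V))=\dim_{\sf k}R_i-\dim_{\sf k}(V\cdot R_{i-j})$ is upper semicontinuous on $G$. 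Hence $Z:=\{V\in G\mid H(R/(V))\ge H\}=\bigcap_{i\ge j}\{V\mid h_i(R/(V))\ge h_i\}$ is Zariski closed; since every $V\in G$ has $H(R/(V))\in\mathcal H(j,d)$ by Lemma~\ref{Hstratalem}, we get $Z=\bigcup_{H'\in\mathcal H(j,d),\,H'\ge H}\Grass_{H'}(R_j,d)$. As $Z\supseteq\Grass_H(R_j,d)$ and $Z$ is closed, this proves the inclusion ``$\subseteq$'' in \eqref{frontiereq}, and also $\overline{\Grass_H(R_j,d)}=\mathfrak p(G_H)$ (the latter being irreducible, closed, and containing the dense subset $\Grass_H(R_j,d)$ of itself).

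For the reverse inclusion it suffices, since $\overline{\Grass_H(R_j,d)}=\mathfrak p(G_H)$, to show that every $V'\in\Grass_{H'}(R_j,d)$ with $H'\ge H$ occurs as $I_j$ for some $I\in G_H$; equivalently, that the ideal $(V')$, which satisfies $H(R/(V'))=H'$, admits an enlargement to a graded ideal $I\subset R$ with $I_j=V'$ and $H(R/I)=H$. I would build $I$ by induction on degree: set $I_i=(V')_i$ for $i\le j$, and, given $I_{i-1}$ of codimension $h_{i-1}$ in $R_{i-1}$, choose $I_i\supseteq R_1I_{i-1}$ of codimension $h_i$ in $R_i$. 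Such a choice exists precisely when $\dim_{\sf k}R_1I_{i-1}\le\dim_{\sf k}R_i-h_i$; at the first step $i=j+1$ this holds because $\dim_{\sf k}R_1V'=\dim_{\sf k}R_{j+1}-h'_{j+1}$ while $h_{j+1}\le h'_{j+1}$, and for larger $i$ one propagates it by taking each $I_i$ as far from having a common factor as the constraint $I_i\supseteq R_1I_{i-1}$ allows, using Macaulay's growth bound for Hilbert functions of quotients of $R={\sf k}[x,y]$ \cite{Mac} together with the conditions on $H$ in Lemma~\ref{Hstratalem} (that $H$ is an $O$-sequence from degree $j$ on with $E_H$ non-increasing, so that $H$ is non-increasing with non-increasing successive differences).

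The delicate point — and the main obstacle — is exactly this feasibility, i.e. that the inductive construction never gets stuck, which is where the defining conditions of $\mathcal H(j,d)$ must be used in an essential way. An alternative, perhaps cleaner, route to the same end is to reduce along a saturated chain in the poset $(\mathcal H(j,d),\le)$ of \eqref{Hposeteq} to the case that $H'$ covers $H$: if one shows $\Grass_{H'}(R_j,d)\subseteq\overline{\Grass_H(R_j,d)}$ whenever $H'$ covers $H$, then for an arbitrary $H'\ge H$ one concatenates the resulting inclusions of closures. In the covering case one only has to exhibit a one-parameter family $V_t\subset R_j$ with $V_0=V'$ and $V_t\in\Grass_H(R_j,d)$ for $t\ne0$, and the translation of the covering relations into elementary moves on the associated partitions (recalled in the introduction) makes such a family transparent to write down. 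Combining the two inclusions gives \eqref{frontiereq}.

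Finally, for the desingularization claim: by the above $\mathfrak p\colon G_H\to\overline{\Grass_H(R_j,d)}$ is surjective, and its source is a smooth projective variety (Theorem~\ref{Hdimthm}), so $\mathfrak p$ is proper; moreover by Lemma~\ref{Hstratalem} it restricts to an isomorphism over the dense open subset $\Grass_H(R_j,d)$ of its image, since over that locus the ideal $I$ is generated in degree $j$ and hence is recovered from $\mathfrak p(I)=I_j$. A proper morphism that is birational onto its (irreducible) image and has smooth source is a desingularization of that image, which completes the proof.
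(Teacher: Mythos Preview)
Your argument follows the same route as the paper's: reduce \eqref{frontiereq} to the surjectivity of $\mathfrak p$ onto the semicontinuity locus, i.e.\ to enlarging each $(V')$ with $H'\ge H$ to an $I\in G_H$ with $I_j=V'$, and then read off the desingularization from the smoothness and projectivity of $G_H$ together with the bijectivity of $\mathfrak p$ over $\Grass_H(R_j,d)$. The one place where the paper is sharper is exactly the step you flag as delicate: the degree-by-degree enlargement is carried out there via the $\tau$ invariant of \eqref{taueq} (specifically \cite[Proposition~4.9]{I}, \cite[Lemma~2.30]{I1}), which controls $\dim_{\sf k} R_1 I_{i-1}$ at each stage and guarantees the induction never gets stuck --- your appeal to Macaulay's bound and to keeping $I_i$ ``far from a common factor'' points in the right direction (indeed $\tau(W)=1$ precisely when $W$ has a linear common factor) but does not by itself supply that control.
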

\par  Showing that $\mathfrak p$ is surjective involves constructing for each  given ideal $I'=(V')$ of Hilbert function $H'=H(R/I')$ a new ideal $I\supset I'$ such that  $H(R/I)=H$; this is accomplished step by step using properties of the invariant $\tau (V)$ of \eqref{taueq} below \cite[Proposition 4.9]{I},\cite[Lemma 2.30]{I1}. The frontier property \eqref{frontiereq} then follows from the known non-singularity and irreducibility of $G_H$ (Theorem \ref{Hdimthm}).\par
We here relate these results to the partitions determining the minimal resolution of $A=R/(V)$ and give examples that we hope will make these results more accessible. 
\section{Restricted tangent bundle strata of $\Grass(R_j,d)$.}
We first state some relevant results of \cite{I1} in Theorem \ref{mainthm1}. We then illustrate these results by two examples: $(j,d)=(6,3)$ (Example \ref{6,3ex}) and $(j,d)=(8,3)$ in Example \ref{8,3ex}. The latter is the example C.~Polini gave in her talk, that of a three-dimensional vector space\ $V=\langle f_1,f_2,f_3\rangle, f_i\in R_8$ with no common factor (base point).  In her discussion the singularity type corresponds to several invariants of the $3\times 2$ ``Hilbert-Burch'' matrix $M$ of relations among the generators of $I=(V)=(f_1,f_2,f_3)$: these include the set of zero entries and also the equalities between entries in a normal form for $M$, and as well more subtle invariants having to do with factors of the entries.\par For $(j,d)=(8,3)$, when there are no base points, the possibilities for the column degrees of $M$ are   $(3,3), (4,2)$ and $(5,1)$. C.~Polini in her talk discussed the behavior under deformation of singularity types within the first,  generic family of $(3,3)$ relation degrees. Our approach applies to all pairs of positive integers $(j,d), d\le j+1$ and to all sequences $D$  but deals with the coarser invariant $H$. We begin by describing properties of the $\tau$ invariant: as we shall see, $d-\tau (V)$ is just the number of relations of $(V)$ in degree $j+1$, so is equal to the number of $1's$ in the relation degrees $D$ (equation \eqref{minreseq}). \par 
\begin{definition} Let $V\subset R_j$ be a vector subspace. We denote by $\overline{V}$ the \emph{ancestor ideal} of $V$:
\begin{equation}\label{anceq}
\overline{V}=V:R_j+\cdots +V:R_1+(V),
\end{equation}
where $V:R_k=\{ f\in R_{j-k}\mid R_k\cdot f\subset V\}$. It satisfies 
\begin{equation}\label{anc2eq}
\overline{V}\cap \mathfrak m^j=(V)
\end{equation}
and is the largest ideal satisfying \eqref{anc2eq}.\footnote{The definition of ${\overline{V}}$  is independent of the number of variables; but the results concerning $\tau(V)$ require the ring $R={\sf k}[x,y]$.}
The $\tau$ invariant of $V$ is
\begin{align}\label{taueq}
\tau (V)&=\dim_{\sf k} R_1\cdot V-\dim_{\sf k} V\\
&=1+ \cod_{\sf k} V-\cod_{\sf k} R_1V=1+e_{j+1}\notag
\end{align}
where $\cod_{\sf k} V=j+1-d$ and $\cod_{\sf k} R_1V=\dim_{\sf k} R_{j+1}/R_1V=H(R/(V))_{j+1}$.
\end{definition}

\begin{lemma}\cite[Lemma 2.2]{I1} The integer $\tau(V)$ is the number of generators of the ancestor ideal $\overline{V}$. We have 
\begin{equation}\label{boundtaueq}
1\le \tau(V)\le \min\{\dim_{\sf k} V, 1+\cod_{\sf k} V\}. 
\end{equation}
\end{lemma}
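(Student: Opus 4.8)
The plan is to reduce both assertions to one elementary fact about multiplication in $R={\sf k}[x,y]$: \emph{for every graded ${\sf k}$-subspace $W\subseteq R_m$ one has $\dim_{\sf k}R_1W=2\dim_{\sf k}W-\dim_{\sf k}(W:R_1)$}, where $W:R_1=\{g\in R_{m-1}\mid R_1g\subseteq W\}$. To see this, consider the surjection $W\oplus W\twoheadrightarrow R_1W$, $(f,g)\mapsto xf-yg$; a pair lies in the kernel iff $xf=yg$, and coprimality of $x,y$ forces $g=xh$, $f=yh$ for some $h\in R_{m-1}$ with $R_1h\subseteq W$, so the kernel is isomorphic to $W:R_1$. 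Taking $W=V$ yields the reformulation $\tau(V)=\dim_{\sf k}R_1V-\dim_{\sf k}V=\dim_{\sf k}V-\dim_{\sf k}(V:R_1)$, which I will use for both parts. I also record the identity $(V:R_k):R_1=V:R_{k+1}$, immediate from the definitions since $R_1g\subseteq V:R_k\iff R_{k+1}g\subseteq V$.

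For the generator count, by graded Nakayama $\mu(\overline{V})=\dim_{\sf k}(\overline{V}/\mathfrak m\overline{V})=\sum_i\dim_{\sf k}(\overline{V}_i/R_1\overline{V}_{i-1})$. The graded pieces of the ancestor ideal are $\overline{V}_i=V:R_{j-i}$ for $0\le i\le j$ and $\overline{V}_i=R_{i-j}V$ for $i\ge j$; set $v_i=\dim_{\sf k}\overline{V}_i$, with $v_i=0$ for $i<0$. Applying the displayed formula to $W=\overline{V}_{i-1}$ together with $(\overline{V}_{i-1}):R_1=\overline{V}_{i-2}$ (the identity above, valid for $0\le i\le j+1$) shows that the number of minimal generators of $\overline{V}$ in degree $i$ equals the second difference $v_i-2v_{i-1}+v_{i-2}$ for all $0\le i\le j+1$, while in degrees $i>j+1$ no new generators occur because there $\overline{V}_i=R_1\overline{V}_{i-1}$. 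Writing $w_i=v_i-v_{i-1}$, the sum $\sum_{i=0}^{j+1}(w_i-w_{i-1})$ telescopes, giving $\mu(\overline{V})=w_{j+1}=v_{j+1}-v_j=\dim_{\sf k}R_1V-\dim_{\sf k}V=\tau(V)$.

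For the bounds I use $\tau(V)=\dim_{\sf k}V-\dim_{\sf k}(V:R_1)$. Since $\dim_{\sf k}(V:R_1)\ge 0$ we get $\tau(V)\le\dim_{\sf k}V$; and since $V:R_1$ is the kernel of the linear map $R_{j-1}\to(R_j/V)^{\oplus 2}$, $h\mapsto(xh,yh)$, we get $\dim_{\sf k}(V:R_1)\ge\dim_{\sf k}R_{j-1}-2\dim_{\sf k}(R_j/V)=2\dim_{\sf k}V-j-2$, hence $\tau(V)\le j+2-\dim_{\sf k}V=1+\cod_{\sf k}V$; together these give the stated upper bound. The lower bound $\tau(V)\ge 1$ is the inequality $e_{j+1}\ge 0$, i.e. $\dim_{\sf k}(V:R_1)\le\dim_{\sf k}V-1$: it is the case $i=j+1$ of Macaulay's growth bound applied to $H(R/(V))$, whose $\varrho$ equals $j$, but it can also be seen directly — multiplication by $x$ embeds $V:R_1$ into $V$, and were it onto then $y(V:R_1)\subseteq x(V:R_1)$ would force every element of $V:R_1$, hence of $V=x(V:R_1)$, to be divisible by arbitrarily high powers of $x$, impossible for a nonzero form of degree $j$.

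The genuinely substantive input is the coprimality argument behind the displayed formula; after that the work is bookkeeping. The one place to be careful is checking that the degree-$i$ generator count really is the second difference $v_i-2v_{i-1}+v_{i-2}$ at the boundary degrees $i=0,1,j,j+1$, and that nothing survives above degree $j+1$ — it is exactly these finitely many coincidences that make the telescoping sum collapse to $\tau(V)$.
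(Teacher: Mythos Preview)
Your argument is correct. The paper does not itself prove this lemma---it is quoted from \cite[Lemma~2.2]{I1}---so there is no in-paper proof to compare against. The route you take, via the two-variable identity $\dim_{\sf k}R_1W=2\dim_{\sf k}W-\dim_{\sf k}(W:R_1)$ and the telescoping of the second differences $v_i-2v_{i-1}+v_{i-2}$, is the natural one in this setting and matches the spirit of the surrounding arguments in \cite{I,I1}. One small point: your direct argument for $\tau(V)\ge1$ is valid but compressed; the induction you are gesturing at is that once $V\subseteq x^kR_{j-k}$, the inclusion $y(V:R_1)\subseteq V$ forces $x^k\mid h$ for every $h\in V:R_1$, whence $V=x(V:R_1)\subseteq x^{k+1}R_{j-k-1}$, and iterating gives $V=0$. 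Invoking Macaulay's bound $e_{j+1}\ge0$, as you also note, is the cleaner route and is exactly what the paper assumes just before Theorem~\ref{Hdimthm}.
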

\begin{example} For $(j,d)=(j,3)$ we have $1\le \tau (V)\le 3$: 
\begin{align}
\tau (V)=1&\Leftrightarrow V=\langle x^2,xy,y^2\rangle \cdot f \text { where  } f\in R_{j-2};\notag\\
\tau (V)\le 2&\Leftrightarrow V=\langle x\cdot f,y
\cdot f,f_3\rangle \text { where } f\in R_{j-1};\notag\\
\tau (V)=3&\text { otherwise }.\notag
\end{align}
\end{example}
We denote by $\Grass_\tau (R_j,d)$ the subfamily of $\Grass (R_j,d)$ parametrizing vector spaces with $\tau (V)=\tau$. It is the subfamily of those $V$ with $d-\tau$ linear relations (see \eqref{Deq}).
\begin{lemma}\cite[Theorem 2.17]{I1} The codimension of $\Grass_\tau (R_j,d)$  in $\Grass(R_j,d)$ satisfies 
\begin{equation}\label{codtaueq}
\cod\, \Grass_{\tau}(R_j,d)=(\dim_{\sf k} V-\tau )(\cod_{\sf k} V- (\tau -1))=(d-\tau)(j+2-d-\tau).
\end{equation}
\end{lemma}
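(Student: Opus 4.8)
The plan is to derive the formula from Theorem~\ref{Hdimthm} and Lemma~\ref{Hstratalem}, reducing the geometric statement to a combinatorial optimization over Hilbert functions. Since $\tau(V)=e_{j+1}+1=\tau_H$ depends only on $H=H(R/(V))$, the stratum decomposes as a disjoint union $\Grass_\tau(R_j,d)=\bigcup\Grass_H(R_j,d)$ over those $H\in\mathcal H(j,d)$ with $\tau_H=\tau$, and by Lemma~\ref{Hstratalem} each piece $\Grass_H(R_j,d)$ is irreducible and dense in $G_H$, so $\dim\Grass_H(R_j,d)=\dim G_H$. Hence
\begin{equation*}
\cod\Grass_\tau(R_j,d)=\dim\Grass(R_j,d)-\dim\Grass_\tau(R_j,d)=d(j+1-d)-\max\{\dim G_H:H\in\mathcal H(j,d),\ \tau_H=\tau\},
\end{equation*}
and everything comes down to computing this maximum. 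Throughout I assume $1\le\tau\le\min(d,j+2-d)$, the range in which the stratum is nonempty by Lemma~\ref{Hstratalem}.

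Next I would put $\dim G_H$ into a convenient shape. For $H\in\mathcal H(j,d)$ one has $h_i=i+1$ for $i<j$, so $\varrho(H)=j$ and $c_H=\lim_i h_i=(j+1-d)-\sum_{i\ge j+1}e_i$. Feeding this into \eqref{dimeq} and using $\sum_{i\ge j}(e_i+1)e_{i+1}=\sum_{i\ge j}e_ie_{i+1}+\sum_{i\ge j+1}e_i$, the two sums $\sum_{i\ge j+1}e_i$ cancel and one gets
\begin{equation*}
\dim G_H=(j+1-d)+\sum_{i\ge j}e_ie_{i+1}=(j+1-d)+(d-1)(\tau-1)+\sum_{i\ge j+1}e_ie_{i+1},
\end{equation*}
since $e_j=d-1$ and $e_{j+1}=\tau-1$. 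I then maximize $\sum_{i\ge j+1}e_ie_{i+1}$ over the admissible tails $(e_{j+1},e_{j+2},\dots)$: non-negative, non-increasing, with $e_{j+1}=\tau-1$ and $\sum_{i\ge j+1}e_i\le j+1-d$ (the condition $c_H\ge 0$). Because the tail is non-increasing with first term $\tau-1$, every $e_i\le\tau-1$ for $i\ge j+1$, so $e_ie_{i+1}\le(\tau-1)e_{i+1}$ and
\begin{equation*}
\sum_{i\ge j+1}e_ie_{i+1}\le(\tau-1)\sum_{i\ge j+2}e_i\le(\tau-1)\bigl((j+1-d)-(\tau-1)\bigr)=(\tau-1)(j+2-d-\tau).
\end{equation*}
This bound is attained by the \emph{staircase} tail with $e_{j+1}=\cdots=e_{j+q}=\tau-1$, $e_{j+q+1}=r$, and $e_i=0$ for $i>j+q+1$, where $j+1-d=q(\tau-1)+r$, $0\le r<\tau-1$: summing the consecutive products telescopes to $(\tau-1)(j+1-d)-(\tau-1)^2$, and the resulting $H$ lies in $\mathcal H(j,d)$ with $\tau_H=\tau$.

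Combining the two displays, $\max\{\dim G_H:\tau_H=\tau\}=(j+1-d)+(d-1)(\tau-1)+(\tau-1)(j+2-d-\tau)$. Subtracting this from $d(j+1-d)$ and simplifying — writing $n=j+1-d$, the difference collapses to $(d-1)n-(\tau-1)(j+1-\tau)$, which factors as $(d-\tau)(j+2-d-\tau)$ — gives the asserted value, and the identification with $(\dim_{\sf k}V-\tau)(\cod_{\sf k}V-(\tau-1))$ is immediate from $\dim_{\sf k}V=d$, $\cod_{\sf k}V=j+1-d$. I expect the delicate step to be the attainment claim: one must verify that the staircase $H$ genuinely belongs to $\mathcal H(j,d)$, i.e. that $E_H$ is non-increasing and $1\le\tau_H\le\min(d,j+2-d)$, which is exactly where the hypothesis $1\le\tau\le\min(d,j+2-d)$ is used (it forces $\tau-1\le d-1=e_j$ and $\tau-1\le j+1-d$, hence $q\ge1$ and a legitimate sequence), and dually that no tail violating the admissibility constraints can beat the displayed upper bound; once these bookkeeping points are settled the maximum is pinned down exactly rather than merely estimated.
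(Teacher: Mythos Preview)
The paper does not give its own proof of this lemma: it is stated with a bare citation to \cite[Theorem~2.17]{I1}, so there is nothing in the present paper to compare your argument against line by line.

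That said, your argument is correct and is a natural in-paper derivation from the other results quoted here. The reduction $\cod\Grass_\tau(R_j,d)=d(j+1-d)-\max\{\dim G_H:\tau_H=\tau\}$ is justified because $\Grass_\tau(R_j,d)$ is the finite disjoint union of the $\Grass_H(R_j,d)$ with $\tau_H=\tau$, each of which is open dense in the irreducible $G_H$ (Lemma~\ref{Hstratalem}). Your simplification $\dim G_H=(j+1-d)+\sum_{i\ge j}e_ie_{i+1}$ from \eqref{dimeq} is right, the upper bound $\sum_{i\ge j+1}e_ie_{i+1}\le(\tau-1)(j+2-d-\tau)$ is valid, and the staircase tail attains it (your telescoping check $(q-1)(\tau-1)^2+(\tau-1)r=(\tau-1)\bigl((j+1-d)-(\tau-1)\bigr)$ is exactly what is needed). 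The admissibility of that staircase $H$ uses precisely $\tau\le d$ (so $e_j=d-1\ge\tau-1=e_{j+1}$) and $\tau\le j+2-d$ (so $q\ge1$ and $c_H\ge0$), and the boundary case $\tau=1$ is handled by the trivial tail $e_i\equiv0$. The final algebra reducing $(d-1)(j+1-d)-(\tau-1)(j+1-\tau)$ to $(d-\tau)(j+2-d-\tau)$ is correct.

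In the language of the paper your maximizing $H$ is exactly the one whose partition $\lambda$ is the most balanced partition of $j+1-d$ into $\tau-1$ parts (so $\ell(\lambda)=0$ and $c_H=0$); equivalently it is the generic stratum inside $\Grass_\tau(R_j,d)$. This is consistent with how \cite{I1} organizes things via \eqref{codeq2}, so even without access to the original proof one expects your route to be a repackaging of the same computation rather than a genuinely different argument.
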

For an integer $n\in \mathbb Z$ we denote by $|n|^+$ the integer $n$ if $ n\ge 0$ and $0$ otherwise. We write $P\vdash n$ for ``$P$ partitions $n$''. Recall the \emph{Bruhat} or \emph{orbit closure} partial order on partitions of $n$

\begin{equation}\label{Bruhateq}  P\ge P'\quad\Leftrightarrow\quad \forall k\in \mathbb N,\,\, \quad\sum_{0}^kP_i\ge 
\sum_{0}^kP'_i.
\end{equation}
We denote by $\mathcal P(a)$ the poset of partitions of $a$ and by $\mathcal P(a,b)$ the poset of partitions of $a$ into exactly $b$ non-zero parts, under the Bruhat partial order; set $\mathcal P(a,\le b)= \mathcal P(a,1)\cup \cdots \cup \mathcal P(a,b)$. Recall that  $c_H, H={H(R/(V))}$ is the degree of gcd$(V)$  -- the number of base points.
\begin{lemma}\cite[Lemma 2.23,(2.45)]{I1} The minimal resolutions that occur for algebras $R/(V)$ for which $\tau (V)=\tau$ and $c_H=0$ can be written 
\begin{equation}\label{minreseq}
0\to \sum_{i=1}^{\tau -1} R(-j-1-\lambda_i)\oplus R(-j-1)^{d-\tau}\to R(-j)^d\to R\to R/(V)\to 0,
\end{equation}
and correspond 1-1 to the partitions 
\begin{equation}\label{parteq}
\lambda=(\lambda_1,\ldots, \lambda_{\tau -1}), \ \lambda_1\ge \lambda_2\ge \cdots \ge \lambda_{\tau-1}>0 \text { of } j+1-d \text { into } \tau -1 \text { parts }.
\end{equation}
\end{lemma}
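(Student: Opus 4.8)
The plan is to read the shape of the minimal resolution off the Hilbert--Burch theorem, to count the linear relations using the invariant $\tau$, to pin down the remaining degree shifts by a degree count on maximal minors, and finally to obtain the bijection by translating partitions into Hilbert functions and invoking Lemma~\ref{Hstratalem}. Since $c_H=0$ the gcd of $V$ is a constant, so $(V)$ is $\mathfrak m$-primary, hence a height-two perfect ideal in the two-dimensional ring $R$, and Hilbert--Burch gives a minimal free resolution $0\to F_2\to F_1\to R\to A\to 0$ with $\rk F_2=\rk F_1-1$, where $A=R/(V)$. Moreover $(V)$ is minimally generated by a basis of $V$ in degree $j$: indeed $(V)_k=0$ for $k<j$, $(V)_j=V$, and $(V)_k=R_1\cdot (V)_{k-1}$ for $k>j$, so there are no generators in other degrees. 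Hence $F_1=R(-j)^d$, $\rk F_2=d-1$, and --- since the chosen generators are ${\sf k}$-linearly independent --- there is no syzygy in degree $j$, so $F_2=\bigoplus_c R(-j-1-\mu_c)$ with all $\mu_c\ge 0$.

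Next I count how many of the $\mu_c$ vanish, i.e. the number of linear relations. A syzygy of degree $j+1$ lies in the kernel of $R_1^d\to R_{j+1}$, $(g_i)\mapsto\sum g_if_i$, whose image is $R_1V$; since there is no syzygy of lower degree, the whole degree-$(j+1)$ component of the syzygy module consists of minimal generators, so their number is $\dim_{\sf k}R_1^d-\dim_{\sf k}R_1V=2d-\dim_{\sf k}R_1V$. Using $\dim_{\sf k}R_1V=\dim_{\sf k}R_{j+1}-h_{j+1}$ together with $h_j=j+1-d$ and $h_{j+1}=h_j-e_{j+1}=(j+1-d)-(\tau-1)$ (recall $\tau=1+e_{j+1}$ from~\eqref{taueq}), this number equals $2d-(d+\tau)=d-\tau$. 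Thus $d-\tau$ of the $\mu_c$ are $0$ and the other $(d-1)-(d-\tau)=\tau-1$ are positive; writing the latter as $\lambda_1\ge\cdots\ge\lambda_{\tau-1}>0$ yields exactly the resolution~\eqref{minreseq}.

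To see that $\lambda$ partitions $j+1-d$, recall that by Hilbert--Burch the generators $f_i$ are, up to a unit, the signed maximal minors of the $d\times(d-1)$ relation matrix $M$; each such minor is a sum of products of one entry from every column, hence homogeneous of degree equal to the sum of the column degrees of $M$, which is $(d-1)+\sum_i\lambda_i$. Since $\deg f_i=j$ this forces $\sum_i\lambda_i=j+1-d$. (Equivalently one may equate to zero the derivative at $t=1$ of the numerator $1-dt^j+(d-\tau)t^{j+1}+\sum_i t^{j+1+\lambda_i}$ of the Hilbert series of the Artinian algebra $A$.) So $\lambda=(\lambda_1,\dots,\lambda_{\tau-1})$ is a partition of $j+1-d$ into $\tau-1$ parts as in~\eqref{parteq}, the inequality $\tau-1\le j+1-d$ being part of the range $1\le\tau\le\min(d,j+2-d)$ forced by~\eqref{boundtaueq}.

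Finally, the correspondence. The resolution~\eqref{minreseq}, and so $\lambda$, is an invariant of $A$ up to isomorphism and is determined by, and determines, $H=H(A)$; concretely $\lambda$ is the conjugate partition $\mu^{*}$ of $\mu=(e_{j+1},e_{j+2},\dots)$, a partition of $j+1-d$ whose largest part is $e_{j+1}=\tau-1$. Hence distinct partitions give distinct resolutions and $V\mapsto\lambda$ is constant on each stratum. For realizability, given $\lambda$ of $j+1-d$ into $\tau-1$ parts set $\mu=\lambda^{*}$ and $E_H=(d-1,\mu_1,\mu_2,\dots)$; one checks that $E_H$ is non-increasing (using $\mu_1=\tau-1\le d-1$, again by~\eqref{boundtaueq}), that $\tau_H=\tau\le\min(d,j+2-d)$, and that the resulting $H$ has $c_H=0$, so by Lemma~\ref{Hstratalem} the stratum $\Grass_H(R_j,d)$ is non-empty and each of its members has resolution~\eqref{minreseq} with the prescribed $\lambda$. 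I expect this last, realizability, step to be the only one not forced term by term; but it reduces directly to the classification of admissible Hilbert functions already recorded in Lemma~\ref{Hstratalem}, so no new difficulty arises.
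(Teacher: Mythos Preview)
The paper does not supply a proof of this lemma; it is quoted from \cite[Lemma~2.23, (2.45)]{I1}, so there is no in-paper argument to compare against. Your proof is correct and follows the natural route: Hilbert--Burch for the height-two $\mathfrak m$-primary ideal $(V)$ gives the shape $0\to R^{d-1}\to R(-j)^d\to R$, the count $d-\tau$ of linear syzygies follows directly from $\tau(V)=\dim_{\sf k}R_1V-\dim_{\sf k}V$ (your computation $2d-\dim_{\sf k}R_1V=d-\tau$ is exactly right), the identity $\sum\lambda_i=j+1-d$ comes from the minor-degree count (or equivalently from the Hilbert series), and realizability of every such $\lambda$ is reduced cleanly to Lemma~\ref{Hstratalem} via the conjugate-partition dictionary $\lambda^\vee=(e_{j+1},e_{j+2},\ldots)$, which is precisely the relation \eqref{hilbparteq} recorded later in the paper. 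Nothing is missing.
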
\noindent
We define the partition 
\begin{equation}\label{Deq}
D=D(\lambda)=(\lambda +\underline{1},1^{d-\tau})=(\lambda_1+1,\ldots , \lambda_{\tau -1}+1, \overbrace{1,\ldots, 1}^{\quad d-\tau\quad}) 
\end{equation}
of $j$ into $d-1$ parts, which gives the complete set of relation degrees in \eqref{minreseq}, relative to $j$. Thus, $D$ is the column degrees of the Hilbert-Burch matrix $M_I$ used in \cite{Po} and determines a Hilbert function $H( D)$. We denote by $\MR_D(R_j,d)=\Grass_{H(D)}(R_j,d)$ the stratum of $\Grass(d,R_j)$ parametrizing $V$ satisfying \eqref{minreseq}, with $D$ from \eqref{parteq}, \eqref{Deq}. When $c_H\not=0$ there is a projection $V\to V:f$ where $f=\gcd (V)$. Then,
$\lambda $ partitions $j+1-c_H-d$ into $\tau-1$ parts and $D$ partitions $j-c_H$ into $d-1$ parts. Note that $\tau(H)=1\Leftrightarrow c_H=j+1-d\not=0.$
We denote by $\lambda^\vee$ the conjugate partition to $\lambda$ -- switch rows and columns in the Ferrers diagram of $\lambda$.
\begin{theorem}\label{mainthm1} \cite[Section 2.2, Lemma 2.23, Theorem 2.24,  (2.48),(2.50), (2.54)]{I1}. 
The relation between the partition $\lambda$ and the corresponding Hilbert function $H_\lambda=H(R/(V))$ is 
\begin{equation}\label{hilbparteq}
H_\lambda: \quad H_{\lambda ,{j+i}}=c_H+\sum_u |\lambda_u-i|^+ \text { for $i\ge 0$; also $\Delta H_{\ge j}=\lambda^\vee.$}
\end{equation}
We have for $c_{H_\lambda}=c_{H_\lambda'}=0$,
\begin{equation}\label{ordereq}
H_\lambda\le H_{\lambda'}\Leftrightarrow \lambda\le \lambda'\Leftrightarrow \lambda^\vee\ge (\lambda')^\vee\Leftrightarrow D(\lambda)\le D(\lambda')
\end{equation}
 in the Bruhat order of partitions. \par\noindent
The stratum $\MR_D(R_j,d)$is irreducible and has codimension in $G=\Grass(R_j,d)$, 
\begin{equation}\label{codeq2}
\ell(D)= c_{H(D)}\cdot (d-1)+\sum_{u\le v}(D_u-D_v-1)^+.
\end{equation}
Its codimension in $\Grass_{\tau}(R_j,d)$ is
$
\ell(\lambda)=c_{H(D)}\cdot (d-1)+\sum_{u\le v}(\lambda_u-\lambda_v-1)^+
$.\par

\end{theorem}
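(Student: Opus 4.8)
The plan is to read Theorem~\ref{mainthm1} as a dictionary among four bookkeeping devices for one and the same stratum --- the Hilbert function $H_\lambda=H(R/(V))$, the partition $\lambda$, its conjugate $\lambda^\vee$, and the column-degree partition $D(\lambda)$ of \eqref{Deq} --- and to deduce every assertion from the minimal free resolution \eqref{minreseq} (taken as given) together with the structure results already quoted, namely Theorem~\ref{Hdimthm} and Lemma~\ref{Hstratalem}. All of the combinatorics will rest on the single identity
\begin{equation*}
\sum_{u}|\lambda_u-i|^+=\sum_{m>i}\lambda^\vee_m\qquad(i\ge 0),
\end{equation*}
which is immediate from $\max(\lambda_u-i,0)=\sum_{m\ge 1}[\lambda_u\ge m][m>i]$ summed over $u$, together with $\sum_{m\ge 1}\lambda^\vee_m=|\lambda|$.

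First I would prove \eqref{hilbparteq}. When $c_H=0$ the algebra $R/(V)$ is Artinian with resolution \eqref{minreseq}, so its Hilbert series is the alternating sum
\begin{equation*}
H_{R/(V)}(t)=\Bigl(1-d\,t^{j}+(d-\tau)\,t^{j+1}+\sum_{i=1}^{\tau-1}t^{j+1+\lambda_i}\Bigr)\big/(1-t)^{2}.
\end{equation*}
Reading off the coefficient of $t^{j+i}$ (most transparently by checking that the second difference $\Delta^2H$ reproduces the graded Betti numbers above) gives $H_{\lambda,j+i}=\sum_u|\lambda_u-i|^+$ for $i\ge0$, which glues correctly onto the forced values $H_{\lambda,j-1}=j$ and $H_{\lambda,j}=|\lambda|=j+1-d$. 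The general case follows from the factorization $(V)=f\cdot(V{:}f)$ with $\deg f=c_H$, which multiplies the series by $t^{c_H}$ and adds $c_H$ to each $H_{\lambda,j+i}$. The displayed identity then rewrites this as $H_{\lambda,j+i}=c_H+\sum_{m>i}\lambda^\vee_m$, so the first-difference sequence $(h_j-h_{j+1},\,h_{j+1}-h_{j+2},\dots)$ is precisely $\lambda^\vee$; that is the assertion $\Delta H_{\ge j}=\lambda^\vee$.

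Next I would establish the chain of equivalences in \eqref{ordereq}, under the hypothesis $c_{H_\lambda}=c_{H_{\lambda'}}=0$, so that $\lambda,\lambda'$ (hence $\lambda^\vee,(\lambda')^\vee$) partition $j+1-d$ while $D(\lambda),D(\lambda')$ partition $j$ into exactly $d-1$ parts. (i) $\lambda\le\lambda'\Leftrightarrow\lambda^\vee\ge(\lambda')^\vee$ is the classical fact that conjugation is an order-reversing involution of the dominance order on partitions of a fixed integer. (ii) If $\hat\lambda$ denotes $\lambda$ padded with trailing zeros to $d-1$ parts, then \eqref{Deq} reads $D(\lambda)=\hat\lambda+(1,\dots,1)$; adding the same constant vector to $\hat\lambda$ and $\hat\lambda'$ translates all their partial sums by equal amounts, hence preserves the dominance comparison, giving $D(\lambda)\le D(\lambda')\Leftrightarrow\hat\lambda\le\hat\lambda'\Leftrightarrow\lambda\le\lambda'$. (iii) By the previous step $H_\lambda$ and $H_{\lambda'}$ agree in degrees $<j$ and $H_{\lambda,j+i}-H_{\lambda',j+i}=\sum_{m=1}^{i}\bigl((\lambda')^\vee_m-\lambda^\vee_m\bigr)$, so the termwise order \eqref{Hposeteq} reads $H_\lambda\le H_{\lambda'}\Leftrightarrow\sum_{m=1}^i\lambda^\vee_m\ge\sum_{m=1}^i(\lambda')^\vee_m$ for all $i\Leftrightarrow\lambda^\vee\ge(\lambda')^\vee$. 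Chaining (i)--(iii) gives \eqref{ordereq}.

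Finally I would handle irreducibility and the codimension. Irreducibility of $\MR_D(R_j,d)=\Grass_{H(D)}(R_j,d)$, and that it is open dense in $G_{H(D)}$ (hence of the same dimension), is part of Lemma~\ref{Hstratalem}. For these Hilbert functions $\varrho(H)=j$, with $e_j=h_{j-1}-h_j=d-1$ and, by Step~2, $(e_{j+1},e_{j+2},\dots)=\lambda^\vee$, so Theorem~\ref{Hdimthm} gives $\dim G_{H(D)}=c_H+d\,\lambda^\vee_1+\sum_{k\ge1}(\lambda^\vee_k+1)\lambda^\vee_{k+1}$. Subtracting from $\dim\Grass(R_j,d)=d(j+1-d)$ produces $\cod_G\MR_D$, and it then remains to verify the purely numerical identity that this equals $\ell(D)=c_H(d-1)+\sum_{u\le v}(D_u-D_v-1)^+$ of \eqref{codeq2}; I would carry this out by rewriting both sides in the conjugate coordinates $\lambda^\vee_k$, using $\sum_k\lambda^\vee_k=j+1-d-c_H$ and the standard evaluation of $\sum_{u<v}(D_u-D_v)$. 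The relative statement is then essentially formal: since $D_u-1=\lambda_u$ for every index (with $\lambda$ padded by zeros), the part of $\sum_{u\le v}(D_u-D_v-1)^+$ supported on the first $\tau-1$ indices is exactly $\sum_{u\le v}(\lambda_u-\lambda_v-1)^+$, and the remaining terms assemble into $\cod_G\Grass_\tau(R_j,d)$ of \eqref{codtaueq}, so $\cod_{\Grass_\tau}\MR_D=\ell(D)-\cod_G\Grass_\tau=\ell(\lambda)$ (with the evident modification when $c_H\neq0$). The only place where real work hides is this last bookkeeping identity relating $\sum(e_i+1)e_{i+1}$ to $\sum_{u\le v}(D_u-D_v-1)^+$; all the conceptual weight --- smoothness and irreducibility of $G_H$ with the dimension \eqref{dimeq}, and density of $\Grass_H$ in $G_H$ --- is carried by Theorem~\ref{Hdimthm} and Lemma~\ref{Hstratalem}, which we invoke here as black boxes.
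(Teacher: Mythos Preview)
The paper does not include a self-contained proof of Theorem~\ref{mainthm1}; the result is quoted from \cite{I1}, and the bracketed citation serves as the entire argument. Your proposal therefore supplies what the paper deliberately omits, deriving each assertion from the minimal resolution \eqref{minreseq} together with the black boxes Theorem~\ref{Hdimthm} and Lemma~\ref{Hstratalem}. The route you take is the natural one and is essentially what the arguments in \cite{I1} amount to: the Hilbert-series calculation from \eqref{minreseq} yielding \eqref{hilbparteq} is standard, the chain \eqref{ordereq} drops out of your identity $\sum_u|\lambda_u-i|^+=\sum_{m>i}\lambda^\vee_m$ together with order-reversal under conjugation, and \eqref{codeq2} reduces to a bookkeeping identity between \eqref{dimeq} and $\ell(D)$, which your splitting of $\sum_{u<v}(D_u-D_v-1)^+$ according to whether $v\le\tau-1$ or $v>\tau-1$ handles cleanly in the base-point-free case.

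One caution on the final step. Your claim that ``the remaining terms assemble into $\cod_G\Grass_\tau(R_j,d)$ of \eqref{codtaueq}'' is correct when $c_H=0$, since then $(d-\tau)\sum_{u}(\lambda_u-1)=(d-\tau)(j+2-d-\tau)$. But when $c_H>0$ one has $\sum_u\lambda_u=j+1-d-c_H$, so the cross-block contribution becomes $(d-\tau)(j+2-d-\tau-c_H)$, which differs from \eqref{codtaueq} by $(d-\tau)c_H$; the parenthetical ``evident modification'' hides a genuine discrepancy rather than a triviality. In fact the formula for $\ell(\lambda)$ as printed, with the $c_{H(D)}(d-1)$ term, does not reproduce $\cod_{\Grass_\tau}$ in the paper's own tables once $c_H>0$ (try $\lambda=(3)$, $c_H=1$, $\tau=2$ in Table~\ref{HF1table}: the table gives codimension $4-3=1$ in $\Grass_2$, whereas the displayed $\ell(\lambda)$ equals $2$). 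So the difficulty here likely originates in the paper's transcription from \cite{I1} rather than in your argument; your derivation of \eqref{codeq2} itself, and of the relative formula in the $c_H=0$ case, is sound.
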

 Theorem \ref{closurethm} and \eqref{ordereq} of Theorem \ref{mainthm1} imply
\begin{corollary}\label{posetcor} Fix $(j,d)$. The poset of closures $\{\overline{\Grass_{H_\lambda}(R_j,d)}\mid \lambda$ from \eqref{parteq}$\}$  under inclusion is the poset of those Hilbert functions of Lemma \ref{Hstratalem} with $c_H=0$ under the termwise partial order. It is isomorphic under the map $H_\lambda\to D(\lambda)$ to the poset $\mathcal P(j,d-1)$  and is isomorphic under the map $H_\lambda\to \lambda$ to the poset $\mathcal P(j+1-d,\le (d-1))$.
\end{corollary}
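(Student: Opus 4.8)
The plan is to derive the corollary from two facts already established — the frontier property of Theorem~\ref{closurethm} and the order dictionary \eqref{ordereq} of Theorem~\ref{mainthm1} — the rest being the identification of the relevant index sets.

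\textbf{Step 1: closure--containment reflects the termwise order.} First I would promote the frontier formula \eqref{frontiereq} to a containment criterion. For $H,H'\in\mathcal H(j,d)$, since the strata $\Grass_{H''}(R_j,d)$ are pairwise disjoint and each nonempty (Lemma~\ref{Hstratalem}) and $\overline{\Grass_H(R_j,d)}$ is closed,
\[
\overline{\Grass_{H'}(R_j,d)}\subseteq\overline{\Grass_{H}(R_j,d)}
\ \Leftrightarrow\ \Grass_{H'}(R_j,d)\subseteq\overline{\Grass_{H}(R_j,d)}
\ \Leftrightarrow\ H'\ge H,
\]
where the last equivalence is \eqref{frontiereq}: a nonempty stratum lies inside the disjoint union $\bigcup_{H''\ge H}\Grass_{H''}(R_j,d)$ exactly when its own index dominates $H$. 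Thus $H\mapsto\overline{\Grass_H(R_j,d)}$ is a bijection from $\mathcal H(j,d)$ onto the set of stratum closures under which $H\le H'$ corresponds to $\overline{\Grass_{H'}(R_j,d)}\subseteq\overline{\Grass_H(R_j,d)}$ — the reversal of order also visible in the conjugation clause $\lambda^\vee\ge(\lambda')^\vee$ of \eqref{ordereq}. Restricting to $c_H=0$ yields the first assertion.

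\textbf{Step 2: the index set as posets of partitions.} Next I would pin down $\{H\in\mathcal H(j,d):c_H=0\}$. By \eqref{minreseq}--\eqref{parteq} these are precisely the $H_\lambda$ with $\lambda$ a partition of $j+1-d$ into $\tau-1$ parts, where $1\le\tau\le\min(d,j+2-d)$ by Lemma~\ref{Hstratalem} and, since $c_H=0$ and $\tau(H)=1\Leftrightarrow c_H=j+1-d\ne0$, also $\tau\ge2$ (the trivial case $d=j+1$ aside). Hence the number of parts $\tau-1$ ranges over $1,\dots,\min(d-1,\,j+1-d)$, and as every partition of the positive integer $j+1-d$ has at most $j+1-d$ parts, the only binding condition is ``at most $d-1$ parts'': the admissible $\lambda$ form exactly $\mathcal P(j+1-d,\le d-1)$. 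Formula \eqref{Deq}, $\lambda\mapsto D(\lambda)$ (add $1$ to each part, then append $1$'s to length $d-1$), is then a bijection onto $\mathcal P(j,d-1)$, with inverse ``subtract $1$ from each of the $d-1$ parts and drop the zeros''. By \eqref{ordereq} both $H_\lambda\mapsto\lambda$ and $H_\lambda\mapsto D(\lambda)$ carry the termwise order to the Bruhat order (for $D$ this is also immediate: adding $1$ to every part shifts all partial sums equally), so composing with Step~1 gives the stated identifications of the poset of closures with $\mathcal P(j+1-d,\le d-1)$ and with $\mathcal P(j,d-1)$.

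\textbf{Expected difficulty.} There is no serious obstacle: the corollary repackages Theorems~\ref{closurethm} and \ref{mainthm1}. The two bookkeeping points that deserve care are matching the numerical conditions of Lemma~\ref{Hstratalem} on $\tau_H$ and $E_H$ (together with $c_H=0$) \emph{exactly} with ``$\lambda\vdash j+1-d$ with at most $d-1$ parts'' in Step~2, and tracking consistently the order reversal of Step~1 between the termwise/Bruhat orders and inclusion of closures.
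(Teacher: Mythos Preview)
Your proposal is correct and follows exactly the route the paper indicates: the corollary is stated there as an immediate consequence of Theorem~\ref{closurethm} and \eqref{ordereq} of Theorem~\ref{mainthm1}, and you have simply unpacked those two inputs (frontier property $\Rightarrow$ closure--inclusion matches termwise order; order dictionary $\Rightarrow$ identification with $\mathcal P(j,d-1)$ and $\mathcal P(j+1-d,\le d-1)$) together with the requisite bookkeeping on the index set. There is nothing to add.
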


\begin{example}\label{6,3ex} (As in C. Polini's talk). Let $(j,d)=(6,3)$, so $\cod_{\sf k} (V)=6+1-3=4$, and $\dim \Grass(R_6,3)=3\cdot 4=12$. By equation \eqref{boundtaueq} we have $1\le \tau(V)\le 3$. Assume first that there are no base points: $c_V=0$. When $\tau=3$ the Hilbert functions $H_\lambda$ are determined by the partitions of $\cod_{\sf k} V=4$ into $\tau-1=2$ parts. The generic case has partition $\lambda =(2,2)$, and corresponds to relation degrees $D=(3,3)$ and to Hilbert function $(H_\lambda)_{\ge 6}=(4,2,\underline{0})$. The special case for $\tau=3$ has partition $\lambda'=(3,1)$, and corresponds to relation degrees $D'=(4,2)$ and to $(H_{\lambda'})_{\ge 6}=(4,2,1,\underline{0})$. By \eqref{codeq2}  $\Grass_{H_{\lambda'}}(R_j,d)$ has codimension 1 in $\Grass(R_6,3)$. When $\tau =2$ the unique partition is $\lambda''=(4) $: the relation degrees $D''=(5,1)$, the Hilbert function $(H_{\lambda''})_{\ge 6}=(4,3,2,1,\underline{0})$; and by \eqref{codtaueq} $\Grass_{H_\lambda''}(R_6,3)=\Grass_2(R_6,3)$ has codimension 3 in $\Grass (R_6,3)$. Here $D''$ is the most special case with $c_H=0$.
\begin{table}
\begin{center}$
\begin{array}{|ccccccc|}
\hline
\lambda &c_H&\tau& H_{\ge 6}&\lambda^\vee=\Delta H_{\ge 6}&\dim&\cod\,\\
\hline
&&&&&&\\
(2,2)&0\,&3&(4,2,\underline{0})&(2,2)&12&0\\
(3,1)&0&3&(4,2,1,\underline{0})&(2,1,1)&11&1\\
(4)&0&2&(4,3,2,1,\underline{0})&(1,1,1,1)&9&3\\\hline
&&&&&&\\
(2,1)&1&3&(4,2,\underline{1})&(2,1)&10&2\\
(3)&1&2&(4,3,2,\underline{1})&(1,1,1)&8&4\\
(1,1)&2&3&(4,\underline{2})&(2)&8&4\\
(2)&2&2&(4,3,\underline{2})&(1,1)&7&5\\
(1)&3&2&(4,\underline{3})&(1)&6&6\\
(0)&4&1&(\underline{4})&(0)&4&8\\
\hline
\end{array}
$
\end{center}
\caption {The $\Grass_H(R_6,3)$ strata.}\label{HF1table}
\end{table}

To summarize, an open dense $U\subset\Grass(R_6,3)$ parametrizes $V$ having no base points, and is decomposed into three $H$-strata corresponding to the three partitions $\lambda,\lambda',\lambda''$ of four or, equivalently by Corollary \ref{posetcor}, the partitions $D,D'$ and $D''$ of six into 2 parts.\par
When $c_H=1$ there is a single base point: then $V=\ell \cdot V'$ where $\ell\in R_1$ and $ V'\in \Grass(R_5,3)$ has codimension $3$ in $R_5$. The two strata correspond to the two partitions of $3=\cod_{\sf k} V'$ into at most $\codk -1=2$ parts: so $\tau=3: \lambda'''=(2,1)$ and $(H_{\lambda'''})_{\ge 6}=(4,2,\underline{1})$; and $\tau=2: \lambda^{(4)}=(3)$ and
$(H_{\lambda^{(4)}})_{\ge 6}=(4,3,2,\underline{1})$. \par When $c_H=2$ the two Hilbert functions are $H_{\ge 6}=(4,\underline{2})$ for $\tau=3$ and partition $\lambda =(1,1)$ and $H_{\ge 6}=(4,3,\underline{2})$ for $\tau=2$ and $\lambda =(2)$. There are two more (see Table \ref{HF1table}). Theorem~\ref{closurethm} and a comparison of the Hilbert functions show that there are two maximal chains in $\mathcal H(6,3)$,  between $H_{\ge 6}=(4,3,\underline {2})$ and $(4,2,1,\underline{0})$, one corresponding to the chain  $\lambda^\vee = (1,1)\le (1,1,1)\le (1,1,1,1)\le (2,1,1)$ and the other to the chain $\lambda^\vee=(1,1)\le (2)\le (2,1)\le (2,1,1)$.
\end{example}
\begin{example}\label{8,3ex} For $(j,d)=(8,3)$ we have $\tau \le 3$ so for $c_H=0$ the partitions $\lambda$ are $(3,3)\le (4,2)\le (5,1)\le (6)$ with the $(3,3)$  stratum being open-dense. 
The Hilbert functions $H(R/(V))_{\ge 8}$ are specified in Table \ref{HF2table}.
Here the conjugate partition $\lambda^{\vee}$ satisfies
\begin{equation}
\lambda^\vee=\Delta H_{\ge 8}=(e_9(H),e_{10}(H),\ldots )
\end{equation}
 the first differences of $H_{\ge 8}$. Since $\Delta H_8=e_9=2$, we have for the $\lambda=(5,1)$ stratum $H_{\ge 8}=(6,4,3,2,1,0)$ that $\lambda^\vee=(2,1,1,1,1)$ and by \eqref{dimeq},
\begin{equation*}
\dim \Grass_H(R_8,3)=(3\cdot 2+3\cdot 1+2\cdot 1+2\cdot 1+2\cdot 1)=15.
\end{equation*}

The closure of the $\lambda =(5,1)$ stratum is its union with the eleven strata whose Hilbert functions $H_{\ge 8}$ lie above $(6,4,3,2,1,0)$: we have indicated these strata with $\ast$ in the rightmost column of Table \ref{HF2table}. By Theorem \ref{mainthm1} this closure has desingularization the family $G_H, H=(1,2,3,4,5,6,7,8,6,4,3,2,1,0)$  -- the family of all graded ideals in $R$ satisfying $H(R/I)=H$. 
\end{example}
\begin{example} For $(j,d)=(9,4)$ the poset of $H$-strata with $c_H=0$ under closure is by Corollary~\ref{posetcor} isomorphic to the poset $\mathcal P(6,\le 3)$ of partitions $\lambda$ of 6 into at most 3 parts, or, equivalently, to the poset $\mathcal P(9,3)$ of partitions $D$ of 9 into exactly 3 parts. The strata for $\lambda=(4,1,1)$ and for $\lambda'=(3,3)$ are incomparable: $H_{\ge 9}(\lambda)=(6,3,2,1,0)$ and $H_{\ge 9}(\lambda')=(6,4,2,0)$. By Theorems~\ref{closurethm} and \ref{mainthm1} the incomparability of partitions  $\lambda,\lambda'$ in the Bruhat order is equivalent to incomparablity of the Hilbert functions $H_\lambda, H_{\lambda'}$ in the termwise partial order, and implies that no closure of a subfamily of one of the strata can intersect the other stratum.
\end{example}
\begin{table}
\begin{center}$
\begin{array}{|cccccc|}
\hline
\lambda&c_H& H_{\ge 8}&\lambda^\vee=\Delta H_{\ge 8}&\dim&\ast \\
\hline
&&&&&\\
(3,3)&0&(6,4,2,0)&(2,2,2)&18&\\
(4,2)&0&(6,4,2,1,0)&(2,2,1,1)&17&\\
(5,1)&0&(6,4,3,2,1,0)&(2,1,1,1,1)&15&*\\
(6)&0&(6,5,4,3,2,1,0)&(1,1,1,1,1,1)&13&*\\\hline
&&&&&\\
(3,2)&1&(6,4,2,1,\overline{1})&(2,2,1)&16&\\
(4,1)&1&(6,4,3,2,1,\overline{1})&(2,1,1,1)&14&*\\
(5)&1&(6,5,4,3,2,1,\overline{1})&(1,1,1,1,1)&12&*\\
(2,2)&2&(6,4,2,\overline{2})&(2,2)&14&\\
(3,1)&2&(6,4,3,2,\overline{2})&(2,1,1)&13&*\\
(4)&2&(6,5,4,3,2,\overline{2})&(1,1,1,1)&11&*\\
(2,1)&3&(6,4,3,\overline{3})&(2,1)&12&*\\
(3)&3&(6,5,4,3,\overline{3})&(1,1,1)&10&*\\
(1,1)&4&(6,4,\overline{4})&(2)&10&*\\
(2)&4&(6,5,4, \overline{4})&(1,1)&9&*\\
(1)&5&(6,5,\overline{5})&(1)&8&*\\
(0)&6&(6,\overline{6})&(0)&6&*\\
\hline
\end{array}
$
\end{center}
\caption {The $\Grass_H(R_8,3)$ strata.}\label{HF2table}
\end{table}
\subsection{Parametrization of rational curves}\label{paramratcurves}
David Cox after seeing an earlier version of this note remarked on the connections to his work with T. Sederberg and F. Chen. He writes that ``the paper \cite{CSC} defines $\mathcal P_n$ as the set of all relatively prime
triples $(a,b,c)$ with $a,b,c \in {\sf k}[t]$ and $ n =\max (\deg(a), \deg(b),
\deg(c))$. Thus, if we homogenize with respect to a new variable $u$,
then $\mathcal P_n$ is canonically isomorphic to the set
\begin{equation}
\{(A,B,C) \in {\sf k}[t,u]^3 \mid A, B, C \text { homogeneous of degree $n$ and }
gcd(A,B,C) = 1  \}
\end{equation}
 Then the subset $\mathcal P_n^\mu$ is naturally isomorphic to the subset of
$\mathcal P_n$ consisting of triples $(A,B,C)$ for which the ideal $I = (A,B,C)$ has a free resolution, taking here $R ={\sf k}[t,u]$,
\begin{equation}
0 \to R(-n-\mu)+R(-2n +\mu) \to R(-n)^3 \to  0.
\end{equation}
with $\mu \le n-\mu$.''  Their main result is the irreducibility and dimension of the closure $\overline{\mathcal P}^\mu_n$:
\begin{uthm}{\cite[Theorem 1.1]{CSC}} For each $\mu, 0\le \mu\le \lfloor n/2\rfloor $ the closure $\overline{\mathcal P}_n^\mu$ is irreducible of dimension 
\begin{equation}\label{Pmueq1}
\dim \overline{\mathcal P}_n^\mu =\begin{cases} &3n+3 \text {\qquad if \quad$\mu=\lfloor n/2\rfloor$}\\
& 2n + 2\mu + 4 \text { \quad if \quad $\mu <\lfloor n/2\rfloor$ }
\end{cases}
\end{equation}
\end{uthm}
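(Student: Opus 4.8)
The plan is to reduce the statement to the known irreducibility and dimension of a Hilbert-function stratum. For $\mu\ge1$ I would recognize $\mathcal P_n^\mu$ as a $\mathrm{GL}_3$-bundle over $\MR_D(R_n,3)$ with $D=(n-\mu,\mu)$, so that Lemma \ref{Hstratalem} gives irreducibility and \eqref{codeq2} gives the dimension; the degenerate value $\mu=0$ gets a short separate count.

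First the dictionary. Fix $1\le\mu\le\lfloor n/2\rfloor$ and put $D=(n-\mu,\mu)$, a partition of $j=n$ into $d-1=2$ parts; by \eqref{Deq} this comes from $\tau=2,\ \lambda=(n-2)$ when $\mu=1$ and from $\tau=3,\ \lambda=(n-\mu-1,\mu-1)$ when $\mu\ge2$, and in every case $c_{H(D)}=0$. A triple $(A,B,C)\in\mathcal P_n^\mu$ has no relation in degree $0$ (as $\mu\ge1$), hence $V:=\langle A,B,C\rangle$ is $3$-dimensional; it is base-point free because $\gcd(A,B,C)=1$; and its minimal resolution \eqref{minreseq} has relation degrees $D$, so $V\in\MR_D(R_n,3)$. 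Conversely every $V\in\MR_D(R_n,3)$ is spanned by such coprime triples. Thus the forgetful map $\pi\colon(A,B,C)\mapsto V$ realizes $\mathcal P_n^\mu$ as the bundle of ordered bases of the tautological rank-$3$ subbundle over $\MR_D(R_n,3)$, a principal $\mathrm{GL}_3$-torsor and hence Zariski-locally trivial. Consequently $\mathcal P_n^\mu$ is irreducible (Lemma \ref{Hstratalem}) of dimension $\dim\MR_D(R_n,3)+9$, and $\overline{\mathcal P}_n^\mu$ is irreducible, being the closure of an irreducible set.

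For the dimension, $\dim\Grass(R_n,3)=3(n-2)$, and by \eqref{codeq2} with $c_{H(D)}=0$ one has
\[
\cod\,\MR_D(R_n,3)=\sum_{u\le v}(D_u-D_v-1)^+=(n-2\mu-1)^+ .
\]
When $\mu<\lfloor n/2\rfloor$ this is $n-2\mu-1\ge0$, giving $\dim\MR_D(R_n,3)=2n+2\mu-5$ and $\dim\overline{\mathcal P}_n^\mu=2n+2\mu+4$; when $\mu=\lfloor n/2\rfloor$ the codimension is $0$, $\MR_D$ is open dense in $\Grass(R_n,3)$, and $\dim\overline{\mathcal P}_n^\mu=3(n-2)+9=3n+3$, which is \eqref{Pmueq1}. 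For $\mu=0$, a degree-$0$ relation forces $\dim V\le2$, and coprimality forces $\dim V=2$ with $V$ a complete intersection; projecting $(A,B,C)$ to $(A,B)$, the coprime pairs spanning a plane are open in $(R_n)^2$ of dimension $2n+2$ and over them $C\in\langle A,B\rangle$ traces out a rank-$2$ vector bundle, so $\mathcal P_n^0$ is irreducible of dimension $2n+4$ (triples with $A,B$ proportional lying in lower dimension), and $\dim\overline{\mathcal P}_n^0=2n+4$, again as in \eqref{Pmueq1}.

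Nearly all the content is thus carried by Lemma \ref{Hstratalem} and \eqref{codeq2}; the work that remains is the bookkeeping translating the index $\mu$ of \cite{CSC} into the invariants $D,\lambda,\tau,c_H$ of this paper, together with the one genuinely separate issue --- the fibre of $\pi$ jumps in dimension precisely at $\mu=0$ (and in small-$n$ edge cases), so there the clean frame-bundle argument must be replaced by the ad hoc count. I expect this $\mu=0$ degeneration to be the only real obstacle. If one also wants $\overline{\mathcal P}_n^\mu$ described set-theoretically, Theorem \ref{closurethm} with \eqref{ordereq} identifies it with $\bigcup_{\mu'\le\mu}\mathcal P_n^{\mu'}$ inside $\mathcal P_n$, the additional base-point strata of $\overline{\MR_D(R_n,3)}$ accounting for the non-coprime triples picked up when the closure is taken in all of $(R_n)^3$.
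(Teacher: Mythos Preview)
Your approach is essentially the same as the paper's: identify $\mathcal P_n^\mu$ (for $\mu\ge 1$) with the $\Gl_3$-frame bundle over $\MR_D(R_n,3)$ for $D=(n-\mu,\mu)$, and read off irreducibility and dimension from Lemma~\ref{Hstratalem} and the codimension formula \eqref{codeq2}. This is exactly the mechanism behind the paper's comparison paragraph and Theorem~\ref{paramratcurvethm}, whose proof is just ``Theorems~\ref{closurethm} and \ref{mainthm1} plus the fibration $\pi$''. Your dimension bookkeeping $3(n-2)-(n-2\mu-1)^++9$ matches the paper's verification that $\cod\,\MR_D=(n-2\mu-1)^+=3n+3-(2n+2\mu+4)$.

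One point worth noting: the paper's general framework requires $D$ to be a partition of $n$ into $d-1$ \emph{nonzero} parts, so its Theorem~\ref{paramratcurvethm} does not literally recover the $\mu=0$ case of \cite{CSC}. You handle this separately and correctly; a slightly cleaner way to see irreducibility there is to fibre over $\Grass(R_n,2)$ (the coprime $2$-planes form an open subset of dimension $2(n-1)$) with fibre the open set of surjections $\sf k^3\twoheadrightarrow V$ (dimension $6$), giving $2n+4$ directly without needing to single out the locus where $A,B$ are proportional.
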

Carlos D'Andrea then gave an explicit approximation of $\mathcal P_n^{\nu}$ by a sequence in $\mathcal P_n^{\nu+1}$, showing
\begin{uthm}\cite[Thm. 1.2, (2)]{D} For $0\le \mu\le \lfloor n/2\rfloor
$ the closure  $\overline{\mathcal P}_n^\mu$ in $\mathcal P_n$ satisfies\begin{equation}
\overline{\mathcal P}_n^\mu=\bigcup_{\nu\le \mu}{\mathcal P}_n^\nu.
\end{equation}
\end{uthm}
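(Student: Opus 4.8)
The plan is to recognize $\mathcal P_n^\mu$ as the total space of a $\mathrm{GL}_3$-torsor over one of the minimal resolution strata $\MR_D(R_n,3)$, so that the asserted closure drops out of the frontier property, Theorem~\ref{closurethm}. Put $R={\sf k}[t,u]$, $j=n$, $d=3$, and let $\mathcal P_n^{\mathrm{ind}}\subset\mathcal P_n$ be the open locus of triples $(A,B,C)$ with $A,B,C$ linearly independent; since a linear relation among $A,B,C$ is exactly a degree-$0$ syzygy, $\mathcal P_n\setminus\mathcal P_n^{\mathrm{ind}}=\mathcal P_n^0$, which is closed in $\mathcal P_n$. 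Let $q\colon\mathcal P_n^{\mathrm{ind}}\to\Grass(R_n,3)$ send $(A,B,C)$ to $\langle A,B,C\rangle$. Then $q$ is a $\mathrm{GL}_3$-torsor onto the base-point-free open locus $U=\{V\mid\gcd(V)=1\}$; in particular $q$ is smooth and surjective with irreducible $9$-dimensional fibers, hence open, so pulling back along $q$ carries locally closed subsets of $U$ to $\mathrm{GL}_3$-invariant locally closed subsets of $\mathcal P_n^{\mathrm{ind}}$, preserving closures, irreducibility, and (up to $+9$) dimension.

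Next I would match up the strata. Homogenizing Cox's free resolution, a triple in $\mathcal P_n^{\mathrm{ind}}$ lies in $\mathcal P_n^\mu$ with $1\le\mu\le\lfloor n/2\rfloor$ precisely when $I=(A,B,C)$ is base-point-free with Hilbert-Burch column degrees $D(\mu)=(n-\mu,\mu)$; equivalently $q^{-1}\!\bigl(\MR_{D(\mu)}(R_n,3)\bigr)=\mathcal P_n^\mu$ for $\mu\ge1$. The relevant column degree partitions — partitions of $n$ into two positive parts — form a single chain in the Bruhat order, with $D(\mu)\le D(\nu)$ iff $\nu\le\mu$, the balanced $D(\lfloor n/2\rfloor)$ being the minimum; by \eqref{ordereq} this says $H_{D(\nu)}\ge H_{D(\mu)}$ termwise whenever $\nu\le\mu$. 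Theorem~\ref{closurethm} then gives that the closure of $\MR_{D(\mu)}(R_n,3)$ in $\Grass(R_n,3)$ is the union of the strata $\Grass_{H'}(R_n,3)$ with $H'\ge H_{D(\mu)}$; intersecting with $U$ removes exactly the strata with $c_{H'}\neq0$ and leaves $\bigcup_{1\le\nu\le\mu}\MR_{D(\nu)}(R_n,3)$, because by Corollary~\ref{posetcor} the base-point-free strata of $\Grass(R_n,3)$ are precisely the $\MR_{D(\nu)}$, $1\le\nu\le\lfloor n/2\rfloor$, arranged in a chain. Pulling back along $q$, the closure of $\mathcal P_n^\mu$ inside $\mathcal P_n^{\mathrm{ind}}$ equals $\bigcup_{1\le\nu\le\mu}\mathcal P_n^\nu$; since $\mathcal P_n^{\mathrm{ind}}$ is open in $\mathcal P_n$ with closed complement $\mathcal P_n^0$, the closure in $\mathcal P_n$ is this set together with some subset of $\mathcal P_n^0$.

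It then remains to check that for $\mu\ge1$ that subset is all of $\mathcal P_n^0$, i.e.\ $\mathcal P_n^0\subset\overline{\mathcal P_n^1}$ (for $\mu=0$ the assertion is just that $\mathcal P_n^0$ is closed). This is the one point where the combinatorics does not finish the job, and I expect it to be the main work: $\mathcal P_n^0$ is invisible to $\Grass(R_n,3)$ since there the three forms become dependent, and a \emph{generic} one-parameter perturbation of a dependent relatively prime triple lands in the dense stratum $\mathcal P_n^{\lfloor n/2\rfloor}$, not in $\mathcal P_n^1$. One must instead produce a carefully chosen non-generic deformation of the redundant third form whose general member has a linear syzygy and no base point, as in \cite{D}, which then completes the proof.

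Finally, the same dictionary recovers the dimension formula \eqref{Pmueq1} of \cite{CSC}: $\dim\mathcal P_n^\mu=\dim\MR_{D(\mu)}(R_n,3)+9=(3n-6)-\ell(D(\mu))+9$, and $\ell(D(\mu))=(n-2\mu-1)^+$ by \eqref{codeq2}, which is $2n+2\mu+4$ for $\mu<\lfloor n/2\rfloor$ and $3n+3$ for $\mu=\lfloor n/2\rfloor$. Nothing above is special to $d=3$: replacing $\Grass(R_n,3)$ and $\mathrm{GL}_3$ by $\Grass(R_n,k+1)$ and $\mathrm{GL}_{k+1}$, the identical argument proves Theorem~\ref{paramratcurvethm} for parametrized rational curves in $\mathbb P^{k}$.
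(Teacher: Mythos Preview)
Your argument is correct and matches the paper's own route: the paper does not reprove D'Andrea's result independently but rather states it and then proves the generalization Theorem~\ref{paramratcurvethm} exactly as you do, by pushing the frontier property of Theorem~\ref{closurethm} (together with the Bruhat-order identification of Theorem~\ref{mainthm1}) through the $\Gl_d$-fibration $\pi$ to the space of ordered $d$-tuples. Your observation that this argument only sees the linearly independent locus is also on target: the paper's $\mathcal{CP}_n^D$ and $\mathcal P_{n,d}$ are defined inside $\mathbb A_U^N$, the open set of independent tuples, and the closures in Theorem~\ref{paramratcurvethm} are taken there, so the paper's generalization does not address the inclusion $\mathcal P_n^0\subset\overline{\mathcal P_n^1}$ at all; for that step one indeed falls back on D'Andrea's explicit one-parameter degeneration, as you say.
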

We now compare to our notation: given $(n,\mu)$ the relation degree  partition is $D=(d_1,d_2)=(n-\mu, \mu)$, so by \eqref{codeq2} the codimension of the stratum $\MR_D(R_j,d)$ is $(n-2\mu-1)^+$: this is the codimension $3n+3-(2n+2\mu+4)$ given in \eqref{Pmueq1}.  Next, we generalize.
\par The affine space $\mathbb A^N,N=d(n+1) $ parametrizes $d$-tuples of degree-$n$ polynomials in ${\sf k}[x]$ and an open dense $\mathbb A_U^N$ parametrizes those that form linearly independent sets.
Given a partition $D$ of an integer $k, d-1\le k\le n$ into $d-1$ parts, we define a subscheme of $\mathbb A_U^N$,
\begin{equation}
\mathcal {CP}_n^D=\{(\mathcal A=(a_1,a_2,\ldots ,a_d),\,a_i\in {\sf k}[x]\}\subset \mathbb A_U^N, N=d(n+1) ,
\end{equation} comprised of those ordered sequencees of  $d$ linearly independent polynomials in ${\sf k}[x]$, each of degree less or equal $n$, that parametrize rational curves in $\mathbb P^{d-1}$ whose relation degrees are $D$. Here $k=n-c$ where $c$ is the degree of gcd($\mathcal A$). When $D$ partitions $n$ (we write $D\vdash n$) the open dense subscheme $\mathcal P_n^D\subset \mathcal {CP}_n^D$ parametrizes those $d$-tuples that are relatively prime and have highest degree $n$; we set $\mathcal P_{n,d}=\cup_{D\vdash n} \mathcal P_n^D$. The general linear group $\Gl_d(\sf k)$ acts freely on 
$\mathcal {CP}_n^D$ and we have a surjective projection with fibre $\Gl_d(\sf k)$

 \begin{equation}
\pi: \mathcal {CP}_n^D\to \MR_D(R_j,d): \pi (\mathcal A)=\langle  A\rangle.
\end{equation}
\begin{theorem}\label{paramratcurvethm} Let $D$ partition $n-c$ into $d-1$ non-zero parts. Then $\mathcal {CP}_n^D$ is an irreducible locally closed subvariety of $\mathbb A^N$ having dimension  (where $\ell(D)$ is from \eqref{codeq2})
\begin{equation}\label{paramratcurve2eq}
N-\ell(D).
\end{equation}
The closure $\overline{\mathcal {CP}}_n^D$ in  $\mathbb A_U^{(n+1)d}$ satisfies 
\begin{equation}
\overline{\mathcal {CP}}_n^D=\bigcup_{H(D')\ge H(D)}\overline{\mathcal {CP}}_n^{D'}.
\end{equation}
For $D\vdash n$ the closure of ${\mathcal {P}}_n^D$ in $\mathcal P_{n,d}$ is $\overline{\mathcal {P}}_n^D=\bigcup_{D'\ge D, D'\vdash n}\overline{\mathcal {P}}_n^{D'}$.
\end{theorem}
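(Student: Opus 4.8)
\medskip
\noindent\textbf{Proof proposal for Theorem \ref{paramratcurvethm}.}
The plan is to recognize every object in the statement as the pullback, along a $\Gl_d({\sf k})$-bundle, of an object already understood on $\Grass(R_n,d)$, and then to read off the conclusions from Theorems~\ref{closurethm} and \ref{mainthm1}. Set $j=n$. Homogenization to degree $n$, $a(x)\mapsto y^{n}a(x/y)$, is a linear isomorphism ${\sf k}[x]_{\le n}\cong R_n$; applying it coordinate-wise identifies $\mathbb A^{N}$ $(N=d(n+1))$ with $\Hom({\sf k}^{d},R_n)$ and $\mathbb A^{N}_U$ with the locus of injective maps, so that the map $\pi(\mathcal A)=\langle A\rangle$ of the excerpt extends to the tautological projection $\pi\colon\mathbb A^{N}_U\to\Grass(R_n,d)$, a Zariski-locally trivial principal $\Gl_d({\sf k})$-bundle; in particular $\pi$ is open, flat and surjective. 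Since the relation degrees of the rational curve attached to $\mathcal A$ depend only on the ideal generated by $\langle A\rangle$, hence only on the point $\langle A\rangle\in\Grass(R_n,d)$, we get $\mathcal{CP}_n^{D}=\pi^{-1}\bigl(\MR_D(R_n,d)\bigr)$ with $\MR_D(R_n,d)=\Grass_{H(D)}(R_n,d)$, and likewise $\mathcal P_{n,d}=\pi^{-1}(U_0)$, where $U_0=\{V\mid\gcd(V)=1\}$ is the complement in $\Grass(R_n,d)$ of the closed ``common factor'' locus, hence open.

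First I would obtain irreducibility, local closedness and the dimension formula \eqref{paramratcurve2eq}. By Lemma~\ref{Hstratalem} and Theorem~\ref{mainthm1}, $\MR_D(R_n,d)=\Grass_{H(D)}(R_n,d)$ is a nonempty irreducible locally closed subvariety of $\Grass(R_n,d)$ of codimension $\ell(D)$ from \eqref{codeq2}. Being the preimage of an irreducible locally closed set under the $\Gl_d({\sf k})$-bundle $\pi$ over an irreducible base, $\mathcal{CP}_n^{D}$ is irreducible and locally closed in $\mathbb A^{N}_U$, with
\[
\dim \mathcal{CP}_n^{D}=\dim \MR_D(R_n,d)+\dim \Gl_d({\sf k})=\bigl(d(n+1-d)-\ell(D)\bigr)+d^{2}=d(n+1)-\ell(D)=N-\ell(D).
\]

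Next, the two closure assertions. The elementary fact I would use is that for a continuous open surjection $\pi\colon E\to B$ and any subset $S\subseteq B$, $\overline{\pi^{-1}(S)}=\pi^{-1}(\overline S)$: ``$\subseteq$'' is continuity, and if some $x\in\pi^{-1}(\overline S)$ lay outside the closed set $Z:=\overline{\pi^{-1}(S)}$, then $\pi(E\setminus Z)$ would be an open neighbourhood of $\pi(x)\in\overline S$ meeting $S$, producing a point of $\pi^{-1}(S)\subseteq Z$ lying outside $Z$. Applying this with $S=\MR_D(R_n,d)$, together with the frontier property of the Hilbert-function strata (Theorem~\ref{closurethm}) and the bijection between $\mathcal H(n,d)$ and the partitions $D'$ of \eqref{Deq} sending $H'$ to the $D'$ with $H(D')=H'$,
\[
\overline{\mathcal{CP}}_n^{D}=\pi^{-1}\!\left(\overline{\Grass_{H(D)}(R_n,d)}\right)=\pi^{-1}\!\left(\bigcup_{H'\ge H(D)}\Grass_{H'}(R_n,d)\right)=\bigcup_{H(D')\ge H(D)}\mathcal{CP}_n^{D'}.
\]
Transitivity of the termwise order gives $\mathcal{CP}_n^{D'}\subseteq\overline{\mathcal{CP}}_n^{D'}\subseteq\overline{\mathcal{CP}}_n^{D}$ for each term, so the union may be rewritten as $\bigcup_{H(D')\ge H(D)}\overline{\mathcal{CP}}_n^{D'}$, which is the first formula. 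For the last one, when $D\vdash n$ we have $\mathcal P_n^{D}=\mathcal{CP}_n^{D}$ and $\mathcal P_{n,d}=\pi^{-1}(U_0)$ is open in $\mathbb A^{N}_U$, so the closure of $\mathcal P_n^{D}$ taken inside $\mathcal P_{n,d}$ equals $\overline{\mathcal{CP}}_n^{D}\cap\mathcal P_{n,d}$; intersecting the displayed union with $\mathcal P_{n,d}$ keeps exactly the strata with $c_{H'}=0$, and by \eqref{ordereq} of Theorem~\ref{mainthm1} those are precisely the $D'\vdash n$ with $D'\ge D$ in the Bruhat order, which yields $\overline{\mathcal P}_n^{D}=\bigcup_{D'\ge D,\ D'\vdash n}\overline{\mathcal P}_n^{D'}$.

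The step I expect to be the main obstacle is not any of the above formalities — those are driven entirely by Theorems~\ref{closurethm} and \ref{mainthm1} through the bundle $\pi$ — but the set-up itself: verifying that $\pi$ really is a Zariski-locally trivial principal $\Gl_d({\sf k})$-bundle extending the excerpt's $\pi$ on each $\mathcal{CP}_n^{D}$, and, above all, pinning down the dictionary $\mathcal{CP}_n^{D}=\pi^{-1}(\MR_D(R_n,d))$, $\mathcal P_{n,d}=\pi^{-1}(U_0)$. Concretely one must show that ``the rational curve of $\mathcal A$ has relation degrees $D$ with $D\vdash n-c$, $c=\deg\gcd(\mathcal A)$'' is equivalent to ``$H(R/(\langle A\rangle))=H(D)$ and $c_{H(D)}=c$'', which requires keeping track of how a common factor of the affine polynomials $a_i$ together with any drop below degree $n$ jointly determine the base-point number $c_{H(D)}$ of $\langle A\rangle\subset R_n$, and that ``$\mathcal A$ relatively prime of highest degree $n$'' is exactly ``$c_{H}=0$''. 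Granting this dictionary, the theorem follows as above.
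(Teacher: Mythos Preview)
Your proposal is correct and follows exactly the approach the paper takes: the paper's own proof is a single sentence invoking Theorems~\ref{closurethm} and \ref{mainthm1} together with ``the property of the fibration $\pi$, that preserves codimensions and closures.'' You have simply spelled out in detail what that sentence means --- identifying $\pi$ as a $\Gl_d({\sf k})$-bundle, verifying the dictionary $\mathcal{CP}_n^{D}=\pi^{-1}(\MR_D(R_n,d))$ and $\mathcal P_{n,d}=\pi^{-1}(U_0)$, and checking that preimage under an open surjection commutes with closure --- so there is nothing to add beyond noting that your caution about the dictionary (common affine factor plus degree drop jointly accounting for $c_H$) is well placed and resolves correctly.
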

\begin{proof} This is a consequence of Theorems \ref{closurethm} and \ref{mainthm1},  and the property of the fibration $\pi$, that preserves codimensions and closures.
\end{proof}\par
There are now many articles on $\mu $ bases, as \cite{SC,WJG} for space curves. We hope this extension to all embedding dimensions might be useful.
\section{Rational scroll strata for vector spaces of forms.}\label{ratscrollsec}
Recall that the ancestor ideal $\overline V$ defined in \eqref{anceq} is the largest ideal satisfying $\overline{V}\cap \mathfrak m^j=(V)$. We define
the ``nose' or ``scroll'' Hilbert function $N(V)$:
\begin{equation}
N(V)= H_{\le j} (R/\overline{V}),
\end{equation}
which satisfies $e_{i-1}(N)\le e_i(N)$ for $i\le j$. Let $\mathcal N(j,d)$ denote the set of possible nose sequences.
Here $N$ is determined by a partition ${\mathcal A}_N$ of $d$ into $\tau$ parts. Setting $I=\overline V$, we have
\begin{equation}\label{noseeq}
\dim_{\sf k}I_{j-i}=\sum_u\mid a_u-i\mid^+.
\end{equation}
The partition invariant $\mathcal A_V={\mathcal A}_{N(V)}$ determines the mininum rational scroll on which the rational curve determined by $V$ lies.  Here $N'$ more special is equivalent to $N'\le N$ and $A'\ge A$ \cite[Definition 1.14, Lemma 2.28]{I1}.  We have \cite[Lemma 2.30, Theorem 2.32]{I1} the analogue for $N$ of Theorem \ref{closurethm},
\begin{equation}
\overline{\Grass_N(R_j,d)} =\bigcup_{N'\in \mathcal N(j,d), N'\le N} \Grass_{N'}(R_j,d).
\end{equation}
The codimension of $\Grass_ {N_{\mathcal A}}(R_j,d)$ in $\Grass_\tau (R_j,d)$ satisfies (ibid. Theorem 2.24)
\begin{equation}
\cod_{\negthickspace\tau}\, \Grass_ {N_A}(R_j,d)=\ell (A)=\sum_{u\le v}(a_u-a_v-1)^+.
\end{equation}
We showed analogues in \cite{I1} for $N_A$ and also for the pair $(N_A,H_{\lambda})$ of each statement in Theorem~\ref{mainthm1}. We state the codimension formulas in $\Grass_\tau(R_j,d)$ since the $\Grass_ {N_A}(R_j,d)$ and $\Grass_ {H_V}(R_j,d)$ strata intersect properly in their corresponding $\Grass_\tau(R_j,d)$!
\begin{example} Let $j=9, d=4$, and fix $\tau =3$.
Consider the partition $A=(2,1,1)$, where $\dim_{\sf k}  \overline{V}_{6,7,8,9}=(0,0,1,4)$ and $N_{V}= (1,2,3,4,5,6,7,8,8,6)$.
The closure of the stratum $N_A$ is all strata $N_{A'}
$ that are termwise smaller or equal to $N_A$: such that $A'\ge A$ in the orbit closure order of partitions of $4$. A generic element $V_{f,W}$ of $\Grass_{N_A}(R_9,4)$ is determined by a pair $(f,W)\mid f\in R_8$ is generic, and $W$ is a generic $2$-dimensional subspace of $\langle R_9/(R_1\cdot f)\rangle$: so $\Grass_{N_A}(R_9,4)$ is fibred over $\mathbb P_8=\mathbb P (R_8)$  by an open in the Grassmanian $\Grass (8,2)$. Its dimension is $8+12=20$, which agrees with the dimension of $\Grass_{3}(R_9,4)$ from \eqref{codtaueq}. Similar arguments give the structure of other nose strata of $\Grass(R_9,4)$.
\end{example}
\begin{table}
\begin{center}$
\begin{array}{|ccccccc|}
\hline
A&\tau&c_H& H_{6,7,8,9}&\dim_{\sf k}\overline{V}_{6,7,8,9}&A^\vee&\dim \\
\hline
&&&&\\
(1,1,1,1)&4&0&(7,8,9,6)&(0,0,0,4)&(4)&24\\
(2,1,1)&3&0&(7,8,8,6)&(0,0,1,4)&(3,1)&20\\
(2,2)&2&0&(7,8,7,6)&(0,0,2,4)&(2,2)&14\\
(3,1)&2&0&(7,7,7,6)&(0,1,2,4)&(2,1,1)&13\\
(4)&1&6&(6,6,6,6)&(1,2,3,4)&(1,1,1,1)&6\\

\hline
\end{array}
$
\end{center}
\caption {Nose Hilbert function $\Grass_{ N_A}(R_9,4)$ strata.}\label{HFtable}
\end{table}
\subsection{Problems}
We pose some open questions that warrant further exploration. 
\begin{quest} How do the frontier and desingularization properties of the stratification of $\Grass(R_j,d)$ by $\{\Grass_H(R_j,d)\}$ extend to the finer stratification of rational curves by singularity types? 
\end{quest}

\begin{quest} The behavior of the closures $\overline{\Grass_H(R_j,d)}$ as the limit vector spaces pick up base points is described in Theorems \ref{closurethm} and \ref{mainthm1}. How does this extend to the finer stratification? 
\end{quest}
\begin{quest}  The variety structure of the closures $\overline{\Grass_H(R_j,d)}$ is not known; these are not Schubert varieties \cite{I} (the count is wrong) but are they related to Schubert varieties? What are their cohomology classes in $G$? 
We know $\Grass_N(R_j,d)$ (Section \ref{ratscrollsec}) and $\Grass_H(R_j,d)$, intersect properly on $\Grass_\tau (R_j,d)$. Do they intersect transversely?  Apply \cite{HN}?
\end{quest}
\begin{quest}Given an Artin algebra $A=R/I$ and a linear element $\ell\in R_1$ there is a partition $\pi_{\ell ,A}$ giving the Jordan type of multiplication by $\ell$ in $A$ \cite{HW,BIK}. The set $\pi(A)=\{\pi_{\ell ,A} \mid \ell \in R_1\}$. How does $\pi (A), A=R/(V)$ behave as $V
$ deforms in $\Grass(R_j,d)$? 
\end{quest}

\begin{ack} This note was inspired by C. Polini's talk at ALGA 2012, at I.M.P.A. in Rio de Janeiro \cite{Po}. It is related to work begun some time ago with F.~Ghione and G.~Sacchiero \cite{GIS}, that depended on Section 4 of \cite{I} and an early version of \cite{I1} and I am grateful for many enjoyable conversations with them in our common language, le fran\c{c}ais. I am grateful for the work of the organizers of ALGA~2012, the welcome of E.~Esteves and his colleagues and the staff of I.M.P.A., the talks and many discussions we had there. A conversation with C. Polini and B. Ulrich while visiting MSRI for the workshop ``Combinatorial Commutative Algebra and Applications'' in December 2012 suggested that a clearer translation to the degree $D$ might be of use. D. Cox pointed out the relation to papers of he and coauthors on parametrization of planar rational curves, discussed in Section \ref{paramratcurves} and made helpful comments. I appreciate the encouragement of E. Esteves and others to write this note. I have had many enlightening conversations with Steve Kleiman over the years, beginning with attending the Zariski seminar and joining the AMS Algebraic Geometry meeting at Woods Hole in 1964 where he led a ``miniseminar''  and welcome the chance to thank him.
\end{ack}
\par

\end{document}